\documentclass{amsart}
\usepackage[utf8]{inputenc}
\usepackage{amsfonts}
\usepackage{amsmath}
\numberwithin{equation}{section}
\usepackage{amssymb}
\usepackage{float}
\usepackage{tikz-cd}
\usepackage{amsthm}
\usepackage{changes}
\usepackage{enumerate}
\usepackage{tikz}
\usepackage{graphicx}
\usepackage{hyperref}
\usepackage[capitalise]{cleveref}
\usepackage{slashed}  
\usepackage{upgreek}
\usepackage[top=3cm,bottom=3cm,left=2.5cm,right=2.5cm,headsep=10pt,letterpaper]{geometry} 
\usepackage{fancyhdr}
\usepackage{mathrsfs}
\usepackage{dsfont}

\usepackage[
    backend=biber,
    style=alphabetic,
    sorting=nyt
]{biblatex}
\numberwithin{equation}{section}
\theoremstyle{plain}

\newtheorem{theorem}{Theorem}[section]
\newtheorem{proposition}[theorem]{Proposition}
\newtheorem{lemma}[theorem]{Lemma}

\theoremstyle{definition}

\usepackage{tikz}\usetikzlibrary{calc}

\theoremstyle{remark}

\theoremstyle{plain}

\newcommand{\RP}{\mathbb{RP}}

\newcommand{\FS}{\mathrm{FS}}

\newcommand{\CP}{\mathbb{CP}}

\newcommand{\Hess}{\operatorname{Hess}}

\title{Strict Log-Concavity of First Dirichlet Eigenfunctions under Symmetry in $\mathbb{CP}^n$}

\author[J.-L. Hsu]{Jia-Lin Hsu}
\address{Department of Mathematics \\ University of California, Santa Barbara \\  CA93106, USA}
\email{jia-lin@ucsb.edu}
\thanks{J.-L. Hsu is partially supported by NSF DMS 2403557.}

\begin{document}

\begin{abstract}
We prove that the positive first Dirichlet eigenfunction of every smooth domain with convex boundary in $\mathbb{CP}^n$ invariant under the standard $U(1)\times U(n-1)$-action is strictly log-concave for $n=2,3,4$.
\end{abstract}

\maketitle

\section{Introduction}

Let $\Omega$ be a relatively compact smooth domain with convex boundary in a Riemannian manifold $(M,g)$, $u>0$ the first Dirichlet eigenfunction on $\Omega$, i.e.
\begin{align*}
    \Delta_{g}u&=-\lambda_1(\Omega)u && \text{in $\Omega$}\\
    u&=0 && \text{on $\partial \Omega$}.
\end{align*}
The question of whether the convexity of $\partial \Omega$ forces $u$ to be log-concave has a long history. Here, log-concavity means that $\operatorname{Hess}\log u\leq 0$ in $\Omega$. 
Brascamp and Lieb~\cite{MR450480} first proved that the first Dirichlet eigenfunction of every bounded convex domain in $\mathbb{R}^n$ is log-concave. Singer, Wong, Yau, and Yau~\cite{MR829055} later gave another proof with continuity method and used this log-concavity to derive a lower bound for the fundamental gap $\lambda_2-\lambda_1$. This was extended to the sphere~\cite{MR909159}. Andrews and Clutterbuck~\cite{MR2784332} subsequently established a sharp modulus of concavity for $\log u$ and used it to prove the Euclidean fundamental gap conjecture; corresponding results on the sphere were later obtained in~\cite{MR4349140,MR3960269,MR4124117}.
Khan, Nguyen, Tuerkoen, and Wei~\cite{MR4870313} extended this framework to modified logarithmic Hessians on Riemannian manifolds. Wei and Xiao~\cite{wei2025superlogconcavityeigenfunctionshoroconvex} used a constant rank method to prove super log-concavity for horoconvex small domains in hyperbolic space. This log-concavity problem also have been studied in various setting, for example, \cite{dai2026logconcavitylevelsethoroconvexityeigenfunction,wei2025superlogconcavityeigenfunctionshoroconvex, MR1003017,MR4502392,xia2026logconcavitydirichleteigenfunctionsmathbbcp2,MR1799430,MR4931465,MR4913941}.

In this note, we study the log-concavity conjecture for first Dirichlet
eigenfunctions on convex domains in $\mathbb{CP}^n$ that are invariant
under suitable isometric group actions. The conjecture was stated by
Dai, Seto, and Wei~\cite{DaiSetoWei2019} and attributed there to
Zhiqin Lu. We consider domains invariant under the following
$U(1)\times U(n-1)$-action on $\mathbb{CP}^n$:
\begin{align*}
(e^{i\theta},A)\cdot[z_0:z_1:z']
    =[z_0:e^{i\theta}z_1:Az'],
    \qquad A\in U(n-1),
\end{align*}
where $z'=(z_2,\ldots,z_n)$. We obtain the following partial affirmative
answer to Lu's conjecture
\begin{theorem}
    Let $n=2,3,$ or $4$. If $\Omega$ is a $U(1)\times U(n-1)$-invariant smooth domain with convex boundary in $\mathbb{CP}^n$, then its first Dirichlet eigenfunction is strictly log-concave.
\end{theorem}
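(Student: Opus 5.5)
The plan is to reduce the problem, via the $U(1)\times U(n-1)$-symmetry, to an elliptic problem on a two-dimensional orbit space, and then run a continuity/maximum-principle argument in the spirit of Singer--Wong--Yau--Yau~\cite{MR829055} for the reduced equation. Generic orbits of the action are $T^1\times S^{2n-3}$-type, so an invariant function on $\CP^n$ is a function of two variables. We take $r=|z_1|/|z_0|$ and $s=|z'|/|z_0|$, or better Fubini--Study distance-type coordinates: $\rho_1$, the distance to the fixed set $\{z_1=0\}$, and $\rho_2$, the distance to $\{z'=0\}$. Writing $u=u(x,y)$ in these coordinates, we compute $\Delta_{\FS}u$ as $\Delta_Q u+\langle\nabla\log V,\nabla u\rangle$. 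Here $Q$ is the quotient metric on the $2$-dimensional orbit space and $V$ is the orbit volume, which is an explicit product of trigonometric factors such as $\sin\rho\cos^{2n-1}\rho$-type terms. We also compute the full Hessian of $\log u$ on $\CP^n$. Its eigenvalues split into the $2\times2$ Hessian of $\log u$ in the quotient directions, plus the orbit directions. In the orbit directions the Hessian is $(\nabla\log u)\cdot\nabla(\text{orbit radius})/\text{radius}$-type terms, i.e.\ second fundamental form of the orbits contracted with $\nabla\log u$. The $J$-invariance couples the $U(1)$ direction to a radial direction, and this mixing is where $n$ enters.

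Step two is to show that convexity of $\partial\Omega$ and invariance force $\Omega$ to correspond to a convex-type region in the orbit space. We must also check that along $\partial\Omega$, $\log u$ is strictly concave near the boundary. This follows from the Hopf lemma together with strict convexity of the boundary, since $\Hess\log u\approx -\nabla u\otimes\nabla u/u^2+\text{second fundamental form}/u$. We further need to show that the orbit-direction eigenvalues are negative exactly when $\nabla\log u$ points "inward" toward the singular orbits. This will be proved by a maximum principle on the angular derivatives: e.g.\ $\partial_{\rho_1}u$ has a sign, by reflection/moving-plane or by the fact that it satisfies a linearized equation with zeroth-order potential and fixed boundary sign.

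Step three is the core. Deform $\Omega$ through invariant convex domains $\Omega_t$ from a small geodesic ball centred at an invariant point, where strict log-concavity holds by comparison with the Euclidean case, to $\Omega$. Suppose strict log-concavity first fails at time $t_0$ at an interior point in direction $e$. Differentiate the equation for $w=\log u$, namely $\Delta w+|\nabla w|^2=-\lambda$, twice in direction $e$ and use the Ricci/curvature terms of $\CP^n$ (holomorphic sectional curvature $4$, sectional curvatures in $[1,4]$). The resulting inequality at the degenerate point reads $0\ge \Delta w_{ee}\ge -2\,\mathrm{Rm}(e,\nabla w,\nabla w,e)$-type terms plus correction terms from commuting derivatives. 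In $\CP^n$ this curvature term has the wrong sign in general, which is why the conjecture is open. The symmetry is used to restrict the possible directions $e$ and $\nabla w$ to the span of a few explicit vectors: $\nabla w$ lies in the $2$-dimensional quotient directions, and $e$ is either there or one of the orbit directions. This reduces the needed inequality to a finite-dimensional algebraic inequality in the trigonometric coordinates, with $n$ as a parameter.

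The main obstacle is this last algebraic inequality. The curvature term $\mathrm{Rm}(e,\nabla w,\nabla w,e)$ must be absorbed by the orbit second fundamental form terms and by $(n-1)$-weighted terms coming from $\log V$. We expect the resulting polynomial/trigonometric inequality to hold only for small $n$ ($n\le4$), since coefficients like $2n-3$ appear with competing signs. I would first try to write the condition as positivity of an explicit quadratic form in $(\partial_x w,\partial_y w)$ with coefficients in $\tan\rho_i$. Then I would check positivity by discriminant computations case by case for $n=2,3,4$.
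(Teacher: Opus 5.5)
\paragraph{The central gap.} Your Step three contains the whole theorem, and it is left as an expectation. The pointwise data you plan to use also cannot close it. At a null contact, with a fixed extension of the null vector $X$, the second-derivative test gives $0\le-\lambda+3H(JX,JX)-dv(X)^2+3\,dv(JX)^2$. Once one knows that $X$ is tangent to the torus orbit, $JX$ is a quotient direction. Then $H(JX,JX)$ is not determined by $\lambda$, $\nabla w$, the position, or the orbit second fundamental form, because the equation only fixes the trace of the quotient block. Restricting the directions of $e$ and $\nabla w$ by symmetry still leaves a free parameter, so no contradiction follows without further relations. The paper supplies three ingredients your plan lacks:
\begin{enumerate}
\item Simple nullity: $\dim\ker H_p=1$ and $dv(JX)\ne0$. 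This is what lets complex conjugation and the coordinate reflections force $X\in T_p\mathcal O_p$; you leave open whether $e$ is a quotient or an orbit direction.
\item The first-order contact condition $(\nabla_YH)(X,X)=0$, combined with $\mathcal L_XH=0$ and the Ricci identity. This gives $(-\Hess u/u)(\mathds I_{\mathcal O_p}(X,X),\cdot)=\operatorname{Rm}(\nabla v,X,X,\cdot)$.
\item The flat Lagrangian geometry of the torus orbit, via total symmetry of the cubic form and the Gauss equation. This determines $\mathds I_{\mathcal O_p}(X,X)$ in terms of $dv(JX)$, $dv(JY)$ and $H(Y,Y)$.
\end{enumerate}
Together with the trace of the equation, these give an explicit formula for $(-\Hess u/u)(JX,JX)$, and that formula settles $n=2$.

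\paragraph{The case $n=3,4$ and further gaps.} For $n=3,4$ even these relations are insufficient with a fixed extension. The block $H|_{N_p\Sigma}=-c\,g_{\FS}$ contributes the positive term $6(n-2)c\,dv(JX)^2/|\nabla v|^2$. The paper instead applies the second-derivative test to the largest eigenvalue $\mu_1$, which produces the nonnegative rotation term $\mathcal E_X=-\sum_iH(\nabla_{e_i}X,\nabla_{e_i}X)$, computed explicitly via the symmetry. It then controls $c$ by the gradient estimate $|\nabla v|^2\ge(-H(Y,Y))(H(Y,Y)+c)$, derived from the Codazzi equation of the $U(n-1)$-orbit inside a totally geodesic $\CP^{n-1}$. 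The restriction $n\le4$ is exactly $6(n-2)\le12$; your heuristic about competing coefficients such as $2n-3$ is no substitute for this.

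Your proposal has two further gaps:
\begin{enumerate}
\item The path of invariant convex domains from a small ball to $\Omega$ is never constructed. Your Hopf-lemma boundary argument also needs strict convexity, which $\partial\Omega$ need not have. The paper obtains both from mean curvature flow: Huisken's convergence theorem, uniqueness preserving the symmetry, and instantaneous strict convexity.
\item The Step-two sign claim for $\partial_{\rho_1}u$ via moving planes is unsupported, since $\CP^n$ has no totally geodesic real hypersurfaces to reflect across. It is also unnecessary: contacts on singular orbits are excluded by comparing $-\langle\nabla v,\mathds I_{\mathcal O_p}\rangle$ with the second fundamental form of the strictly convex invariant flow hypersurface through $p$.
\end{enumerate}
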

This is the first strict log-concavity result beyond geodesic balls at the sharp convexity threshold. Xia recently proved super-log-concavity in $\mathbb{CP}^2$ under a stronger convexity condition without symmetry assumption~\cite{xia2026logconcavitydirichleteigenfunctionsmathbbcp2}.

The organization of proof is as follows: in Section 2,
following the one-point continuity framework of Khan, Nguyen, Tuerkoen,
and Wei~\cite{MR4870313}, we deform the domain by mean curvature flow
and study $\operatorname{Hess}\log u_t$ along the deformation.
Huisken's convergence theorem~\cite{MR837523} connects the domain to a
Euclidean ball, while uniqueness preserves the symmetry.  Since strict
log-concavity holds at the Euclidean endpoint and near the spatial
boundary, the continuity argument reduces the problem to a
strictification statement: under the symmetry assumption, weak
log-concavity must imply strict log-concavity. Since mean curvature flow makes closed convex hypersurfaces strictly convex instantaneously, a continuity argument extends the result to domains with convex boundary; see Proposition~\ref{reduction to null contact}.

In Section 3,
we use two different mechanisms to exclude null contacts, according to
the orbit type. On singular orbits, the rigidity of the null direction
allows the strict convexity of the boundary to be carried inward by the
mean curvature flow and converted into strict log-concavity along the singular
orbit. On principal orbits, no boundary information is needed: the
flat Lagrangian geometry of the torus orbit, the isotropy group action on the tangent space, together with the
eigenfunction equation, rules out interior null contacts directly. Thus all possible null contacts are ruled out, completing the strictification argument.

\section*{AI Disclosure}
OpenAI's ChatGPT and Anthropic's Claude were used during the development and preparation of this manuscript as supplementary tools for exploring ideas, refining arguments, and improving the presentation. After establishing the fundamental framework under this symmetry assumption and excluding null contact on the singular orbit stratum, we used ChatGPT to help formulate an early version of the maximum-principle argument and to streamline parts of the proof. Any mathematical content obtained with AI assistance was subsequently reviewed, revised, and verified independently by the author. Responsibility for all results and arguments in the paper rests entirely with the author.

\section*{Acknowledgements}
I would like to thank Professor Guofang Wei for her guidance on this project and Yusen Xia for many valuable discussions. I am also grateful to Sung-Tsang Liao and Chun-Kai Lien for helpful discussions on geometric flows with symmetry.

\section{Fubini--Study Metric and the Deformation Argument}
In this section, we review the basic properties of the Fubini--Study metric by viewing it as an invariant metric on a homogeneous space. We then adapt the deformation argument of~\cite{MR4870313}, based on mean curvature flow of hypersurfaces and the induced deformation of metrics on the enclosed domains. Along this deformation, the null contacts can be excluded near the spatial boundary and
near the Euclidean endpoint. In the end, we conclude with the reduction to an
interior null contact.

The complex projective space $\mathbb{CP}^n$ is the space of complex lines in $\mathbb{C}^{n+1}$. The standard action of $U(n+1)$ on $\mathbb{C}^{n+1}$ induces a transitive action on $\mathbb{CP}^n$, with stabilizer $U(1)\times U(n)$ at $[1:0:\cdots:0]$. Hence
\[
\mathbb{CP}^n \simeq U(n+1)/(U(1)\times U(n)).
\]
The Fubini--Study metric is, up to scaling, the unique
$U(n+1)$-invariant Riemannian metric on $\mathbb{CP}^n$;
see~\cite[Chapter~IX, Example~6.3]{kobayashi1996foundations} and \cite[p.55]{MR76393}. The \(U(n+1)\)-action preserves the complex structure \(J\) and is transitive on the holomorphic two-planes. It follows that the holomorphic sectional curvature is constant, and we normalize the Fubini--Study metric so that its holomorphic sectional curvature is \(4\).

We next describe the Riemannian curvature of the Fubini--Study metric.
The unitary reflection across any complex line induces the geodesic
symmetry at the corresponding point of $\mathbb{CP}^n$, so
$\mathbb{CP}^n$ is a Riemannian symmetric space. Since this symmetry
is an isometry with differential $-\operatorname{Id}$, invariance of
the covariant $5$-tensor $\nabla\operatorname{Rm}$ forces
\(
    \nabla\operatorname{Rm}=0.
\) On the other hand, the curvature
formula for K\"ahler manifolds of constant holomorphic sectional
curvature~\cite[Chapter~IX, Proposition~7.3]{kobayashi1996foundations}
gives
\begin{equation}\label{FS curvature formula}
R(A,B)C=
\langle B,C\rangle A-\langle A,C\rangle B+\langle JB,C\rangle JA-\langle JA,C\rangle JB
+2\langle A,JB\rangle JC,
\end{equation}
In particular,
\(
    \sec(X,Y)
    =
    1+3\langle JX,Y\rangle^2
\)
for every orthonormal pair $X,Y$. and
\[
    \operatorname{Ric}(X,X)=2(n+1),
    \qquad
    R_X(Z)
    :=R(Z,X)X=
    Z-\langle X,Z\rangle X
    +3\langle JX,Z\rangle JX,
\]
where we adopt the convention $\operatorname{Rm}(X,Y,Z,W)\equiv \langle R(X,Y)Z,W\rangle:=\langle \nabla_X\nabla_YZ-\nabla_Y\nabla_XZ-\nabla_{[X,Y]}Z,W\rangle$.

We now turn to the deformation argument based on mean curvature flow. For an oriented hypersurface, we take \(\nu\) to be the outward unit normal and use the convention $h(X,Y):=\langle \nabla_X\nu,Y \rangle$ for the second fundamental form. Then the evolution equation for mean curvature flow of hypersurfaces is $\left(\frac{\partial F}{\partial t}\right)^{\perp}=-H\nu$, where $F_{t}$ is the immersion which parametrizes the hypersurface, and the hypersurface is convex means $h\geq 0$.
Since $\mathbb{CP}^n$ has nonnegative sectional curvature and parallel Riemannian curvature tensor, Huisken established the following convergence result for mean curvature flow of connected strictly convex hypersurfaces.
\begin{proposition}[{\cite[Theorem~1.1 and Remark~1(i)]{MR837523}}]
\label{MCF convergence}
Let $\Omega\subset\CP^n$, $n\geq 2$, be a connected smooth domain with strictly convex boundary. Then the maximally extended mean curvature flow
$\{\Sigma_t\}_{0\leq t<T}$ starting from
$\Sigma_0=\partial\Omega$ shrinks to a single round point $p$. That is, there exist smooth positive scaling factor $c_t\to 0$ such that $\frac{1}{c_t}\mathrm{exp}_{p}^{-1}(\Sigma_t)$ converges to the unit sphere $S^{2n-1}\subset T_p\mathbb{CP}^n$ smoothly as $t\to T$.
\end{proposition}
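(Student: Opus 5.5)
The plan is to deduce the proposition from Huisken's theorem on mean curvature flow of convex hypersurfaces in Riemannian manifolds~\cite{MR837523}; the work is in checking that $\CP^n$ with the normalized Fubini--Study metric satisfies its hypotheses. Huisken's main theorem requires three things: uniform bounds $-K_1\le \sec\le K_2$, a positive lower bound on the injectivity radius, and a bound $|\nabla \operatorname{Rm}|\le L$. It also requires an initial pinching condition $H h_{ij}> nK_1 g_{ij}+\frac{n^2}{H}L\,g_{ij}$. From the formula $\sec(X,Y)=1+3\langle JX,Y\rangle^2$ we get $1\le\sec\le 4$, so we may take $K_1=0$ and $K_2=4$. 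The space is compact and symmetric, with injectivity radius $\pi/2$. Finally $\nabla\operatorname{Rm}=0$, as shown above from the geodesic symmetry, so $L=0$. With $K_1=L=0$ the pinching condition reduces to $Hh_{ij}>0$, which is exactly strict convexity of $\Sigma_0=\partial\Omega$. This is the content of Remark~1(i) of~\cite{MR837523}.

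To make the appeal transparent, I would recall the structure of Huisken's argument and point out where each hypothesis enters.

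\emph{Step 1: convexity is preserved.} The second fundamental form evolves by
\[
\partial_t h_{ij}=\Delta h_{ij}-2Hh_{il}h^l_j+|A|^2h_{ij}+(\text{curvature terms}).
\]
The curvature terms involve $\operatorname{Rm}$ and $\nabla\operatorname{Rm}$ contracted with $h$ and $\nu$. Since $\nabla\operatorname{Rm}=0$, the derivative terms drop out. On a null eigenvector of $h$, the remaining terms are sums of sectional curvatures times eigenvalues, which are nonnegative because $\sec\ge0$. Hamilton's tensor maximum principle then keeps $h>0$, and $H>0$ as well. The same maximum principle yields the pinching estimate $h_{ij}\ge\varepsilon H g_{ij}$.

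\emph{Step 2: pinching improves and the gradient of $H$ is controlled.} The key estimate is
\[
|A|^2-\tfrac{1}{2n-1}H^2\le C H^{2-\delta}.
\]
It is proved by Stampacchia iteration, as in the Euclidean case; the ambient curvature errors are lower order because $K_2$ is bounded and $L=0$. One also proves a gradient bound $|\nabla H|^2\le \eta H^4+C_\eta$.

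\emph{Step 3: blow-up and convergence.} A comparison with geodesic spheres, which is possible because $\operatorname{inj}\ge\pi/2$, shows that $T<\infty$ and $\max H\to\infty$. The gradient estimate then gives $H_{\max}/H_{\min}\to1$ and $\operatorname{diam}\Sigma_t\to0$, so the flow contracts to a point $p$. Pull back by $\exp_p^{-1}$ and rescale by $c_t$ so that the enclosed volume is normalized. Under this rescaling the ambient metric converges smoothly to the flat metric on $T_p\CP^n$. The improved pinching then forces the rescaled surfaces to be umbilic in the limit, and higher-derivative estimates upgrade the convergence to smooth convergence to $S^{2n-1}$.

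I expect the main obstacle to be Step 2, the pinching-improvement estimate. In the curved setting one must check carefully that the reaction terms from $\operatorname{Rm}$ in the evolution of $|A|^2/H^2$ are dominated as $H\to\infty$; the vanishing of $\nabla\operatorname{Rm}$ is exactly what removes the troublesome terms there. Since this is precisely Huisken's theorem, in the paper I would simply verify the three hypotheses above and cite~\cite[Theorem~1.1 and Remark~1(i)]{MR837523}.
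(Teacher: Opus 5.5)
Your proposal is correct and follows the paper's own justification. The paper likewise only observes that $\CP^n$ has nonnegative sectional curvature and parallel curvature tensor, so that Huisken's pinching condition reduces to strict convexity, and then cites \cite[Theorem~1.1 and Remark~1(i)]{MR837523}, exactly as you plan to do. Your sketch of Huisken's internal argument is not needed, and one detail in it is loose: $\operatorname{inj}\geq\pi/2$ alone does not make a geodesic-sphere barrier work, since geodesic spheres in $\CP^n$ of radius beyond $\arctan\sqrt{2n-1}$ are not mean convex, whereas finite extinction time follows directly from $\partial_t H\geq\Delta H+H^3/(2n-1)$, using $\operatorname{Ric}>0$ and $H>0$.
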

Huisken's work also yields the following extension from strictly convex to convex initial hypersurfaces.
\begin{proposition}\label{MCF stricification}
Let $\Sigma^{2n-1}\subset\mathbb{CP}^n$, $n\geq2$, be a smooth, closed, embedded convex connected hypersurface. Then the mean curvature flow starting from $\Sigma$ exists smoothly for a short time and becomes strictly convex instantaneously. More precisely, $\Sigma_t$ is strictly convex for every $t>0$ in its interval of smooth existence.
\end{proposition}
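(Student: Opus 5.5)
We argue in three steps: short-time existence, preservation of weak convexity, and a strong maximum principle applied to the second fundamental form. The flow in $\CP^n$ is a quasilinear parabolic system, uniformly parabolic along smooth closed hypersurfaces. Short-time existence and uniqueness of a smooth solution $\{\Sigma_t\}_{0\le t<T}$ with $\Sigma_0=\Sigma$ therefore follow from the standard theory, for instance by writing $\Sigma_t$ as a normal graph over $\Sigma$ and invoking parabolic Schauder theory, exactly as in \cite{MR837523}. Embeddedness and connectedness persist for short time by smoothness.

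The key input is Huisken's evolution equation for the second fundamental form in an ambient manifold with parallel curvature. With our conventions, $\nabla\operatorname{Rm}=0$ kills the derivative-of-curvature terms. One gets
\begin{equation*}
\partial_t h_{ij}=\Delta h_{ij}-|A|^2h_{ij}+2Hh_{i}{}^{k}h_{kj}+\mathcal{R}_{ij}(h),
\end{equation*}
where $\mathcal{R}_{ij}(h)$ is a sum of terms of the form $\operatorname{Rm}*h$ (built from $\overline{R}(e_i,\nu,\nu,e_k)h_{kj}$, $\overline{R}(e_i,e_k,e_j,e_l)h_{kl}$, $\operatorname{Ric}(\nu,\nu)h_{ij}$, and so on). We then apply Hamilton's maximum principle for tensors.

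\textbf{Step 1: weak convexity is preserved.} Let $v$ be a null eigenvector of $h$ at a point. The reaction terms $-|A|^2h(v,v)+2H h^2(v,v)$ vanish there. Using the explicit curvature formula \eqref{FS curvature formula}, the curvature terms evaluated on $v$ are nonnegative whenever $h\ge0$, because $\CP^n$ has nonnegative sectional curvature. This is precisely the null-eigenvector condition Huisken verifies under his hypothesis $\sec\ge0$, $\nabla\operatorname{Rm}=0$ (his Remark 1(i)). Hence $h_t\ge0$ for all $t$.

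\textbf{Step 2: strict convexity at positive times.} We use the strong maximum principle for tensors (Hamilton). On $(0,T)$, the null space of $h_t$ is invariant under parallel transport along $\Sigma_t$ and is constant in time. So if $h_{t_0}$ had a null vector at some point for some $t_0>0$, the kernel would form a nonzero parallel distribution $\mathcal{K}\subset T\Sigma_t$ on $\Sigma_t$ for all $t\in(0,t_0]$, and the inequality in Step 1 would be an equality on it. We rule this out in two ways.

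First, a closed hypersurface in $\CP^n$ always has a point of strict convexity. Take a point of $\Sigma_t$ at maximal distance from a fixed point: there, comparison with a small geodesic sphere, which is strictly convex, gives $h>0$. This contradicts $\mathcal{K}\neq0$ everywhere.

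Second, one can instead use the equality case directly. Equality in the curvature terms with $v\in\ker h$ forces terms like $\overline{R}(v,\nu,\nu,v)=1+3\langle Jv,\nu\rangle^2>0$ to vanish, which is impossible. Either route shows $\ker h_t=0$, so $\Sigma_t$ is strictly convex for every $t>0$ in the interval of smooth existence.

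\textbf{Main obstacle.} The delicate step is the sign verification in Step 1: checking, with the paper's sign conventions for $h$ and for $\operatorname{Rm}$, that the curvature reaction term is nonnegative on null eigenvectors. Alongside it, we must justify the strong maximum principle, specifically that the kernel is parallel and time-independent. For the sign check we would substitute \eqref{FS curvature formula} and $R_X(Z)$ directly into Huisken's general formula and check nonnegativity termwise. For the kernel, we would cite Hamilton's strong maximum principle for systems, rather than redoing it.
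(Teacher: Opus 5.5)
Your short-time existence step and Step 1 match the paper, and using Hamilton's strong maximum principle in place of the paper's local contact argument is a legitimate framework. The gap is in how you conclude Step 2: neither of your two routes works.

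Route (a) rests on a false claim, because a closed convex hypersurface in $\CP^n$ need not have a strictly convex point. The geodesic sphere of radius $\pi/4$ has principal curvatures $\cot(\pi/4)=1$ (multiplicity $2n-2$) and $2\cot(\pi/2)=0$ in the direction $J\nu$. So it is convex, yet $h(J\nu,J\nu)\equiv0$ everywhere. Your farthest-point comparison only yields $h\ge h_{S_R}$. The sphere $S_R$ is strictly convex only when $R<\pi/4$, and this cannot be arranged in general: the sphere above lies in no geodesic ball of radius less than $\pi/4$.

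Route (b) misremembers the reaction term. With the paper's conventions, $\partial_th_{ij}=\nabla_i\nabla_jH-Hh_{ik}h_{kj}+H\operatorname{Rm}(\nu,e_i,e_j,\nu)$. The last term is cancelled by the corresponding term in Simons' identity. Every remaining curvature term except one carries a factor $h(v,\cdot)=0$. In an eigenframe with $e_1=v\in\ker h$ one gets, as in the paper,
\[
\Bigl(\bigl(\tfrac{\partial}{\partial t}-\Delta\bigr)h\Bigr)(v,v)=2\sum_{k\ge2}h_{kk}\operatorname{Rm}(v,e_k,e_k,v)\ \ge\ 2H .
\]
There is no standalone $\operatorname{Rm}(v,\nu,\nu,v)$ term. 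So equality in the null-eigenvector condition does not contradict positive curvature; it only forces $h=0$ at that point.

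The missing ingredient is an argument excluding $h\equiv0$. In your framework, the kernel is a nonzero parallel distribution on the connected $\Sigma_t$, and the reaction term vanishes on it. The display then gives $h\equiv0$ on $\Sigma_t$. You must then invoke that $\CP^n$, $n\ge2$, admits no totally geodesic real hypersurface, which follows from Gauss--Codazzi and \eqref{FS curvature formula}.

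The paper uses exactly this fact, but differently. It gets $H\not\equiv0$ at $t=0$ and applies the scalar strong maximum principle to $(\partial_t-\Delta)H\ge0$ to obtain $H>0$ for $t>0$. The display then makes the reaction strictly positive at any null eigenvector, which a local contact argument rules out.
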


\begin{proof}
By the standard short-time existence theory, the mean curvature flow starting from $\Sigma$ exists smoothly on some time interval $[0,T)$. We first show that convexity is preserved. Suppose that the second fundamental form $h\geq 0$ at some space-time point $(p,t)$ and has a null eigenvector $e_1$, and extend $e_1$ to an orthonormal eigenframe $\{e_i\}_{i=1}^{2n-1}$ for $h$. Since $\nabla\operatorname{Rm}=0$ and $K_{\mathbb{CP}^n}>0$, the evolution equation in \cite[Theorem~3.4]{MR837523} gives
$$
\left(\left(\frac{\partial}{\partial t}-\Delta\right)h\right)(e_1,e_1)
=
2\sum_{i=2}^{2n-1}
h_{ii}\,\operatorname{Rm}(e_1,e_i,e_i,e_1)
\geq0,
$$
The tensor maximum principle therefore shows that convexity is preserved.

On the other hand, by \cite[Corollary~3.5(i)]{MR837523},
\(
\left(\frac{\partial}{\partial t}-\Delta\right)H\geq0.
\)
Since convexity is preserved, $H\geq0$. Moreover, $H\not\equiv0$ at $t=0$, since otherwise $h\equiv0$ and $\Sigma_t$ would be totally geodesic, whereas $\mathbb{CP}^n$, $n\geq2$, admits no totally geodesic real hypersurface by the Gauss-Codazzi equation and the curvature formula for $\mathbb{CP}^n$. The scalar strong maximum principle therefore implies that $H>0$ for every $t>0$. Consequently, at any null eigenvector $e_1$ of $h$ at positive time, $
\left(\left(\frac{\partial}{\partial t}-\Delta\right)h\right)(e_1,e_1)>0$, which leads to a contradiction by local contact argument. Hence $\Sigma_t$ is strictly convex for every $t>0$.
\end{proof}
This convergence result also implies that \(\partial\Omega\) is connected. Indeed, evolve \(\partial\Omega=\partial\Omega_0\) by mean curvature flow, writing the evolving boundary as \(\partial\Omega_t\), and let \(\Sigma_t\) be a connected component with the earliest singular time \(T\). By the preceding results, for \(t\) close to \(T\), \(\Sigma_t\) is strictly convex and bounds a small ball-like region. Since \(\Omega_t\) has convex boundary, $\Omega_t$ lies inside this region, and hence every other connected component of \(\partial\Omega_t\) must lie inside it. By shifting the flow of \(\Sigma_t\) slightly forward in time and applying the avoidance principle~\cite{white2024avoidance}, any such component would have to become singular strictly before \(T\), contradicting the choice of \(T\). Hence \(\partial\Omega\) is connected, and the preceding two propositions yield a deformation of the initial domain to the Euclidean unit ball.

\begin{proposition}\label{metric deformation}
Let $\Omega\subset\mathbb{CP}^n$, $n\geq2$, be a connected smooth
domain with convex boundary. Then the mean curvature flow induces a Riemannian metrics
$\{g_t\}_{0\leq t\leq T}$ on $\overline{B_1(0)}$ such that 
\begin{itemize}
    \item $g_{t}$ is continuous in $\mathcal{C}^\infty$-topology.
    \item $g_{t}$ is smooth on $\overline{B_1(0)}\times [0,T)$;
    \item for
$t<T$,
\(
(B_1(0),g_t)
\)
is isometric, up to a constant rescaling, to an enclosed domain of
the mean curvature flow
    \item $g_{T}=g_{euc.}$
\end{itemize}
Moreover, $\partial B_1(0)$ is strictly convex with respect to
$g_t$ for every $t>0$.
\begin{proof}
Let $\Omega_t$ denote the region enclosed by $\Sigma_t$. By
Propositions~\ref{MCF convergence} and~\ref{MCF stricification},
the hypersurfaces are strictly convex for every positive time,
shrink to a single point $p$, and satisfy

$$
\overline{\Omega_{t_2}}\subset\Omega_{t_1},
\qquad
0<t_1<t_2<T.
$$
Since the normal speed is positive, the hypersurfaces
$\{\Sigma_t\}_{0<t<T}$ form a smooth foliation of
$\Omega\setminus{p}$. In particular, the enclosed domains may be
smoothly identified with the unit ball in a way depending smoothly
on $t<T$.

Near the extinction time, we choose these identifications in
geodesic normal coordinates centered at $p$ and apply Huisken's
normalized rescaling. The rescaled domains converge smoothly to
$B_1(0)$, while the rescaled ambient metrics converge smoothly to
$g_{\mathrm{Euc}}$. Pulling back these rescaled metrics to
$B_1(0)$ therefore gives a family $\{g_t\}_{0\leq t<T}$ which is
smooth on $\overline{B_1(0)}\times[0,T)$ and converges to
$g_{\mathrm{Euc}}$ in the $\mathcal C^\infty$-topology as
$t\to T$. Hence, setting
\(
g_T:=g_{\mathrm{Euc}}
\)
extends the family continuously to $[0,T]$ in the
$\mathcal C^\infty$-topology. By construction, for every $t<T$,
$(B_1(0),g_t)$ is isometric, up to a constant rescaling, to the
domain enclosed by the mean curvature flow.

Finally, pullback and constant rescaling preserve the sign of the
second fundamental form. Since $\Sigma_t$ is strictly convex for
every $t>0$, the boundary $\partial B_1(0)$ is strictly convex with
respect to $g_t$ for every $t>0$.
\end{proof}
\end{proposition}

Along this metric deformation with strictly convex boundaries, strict log-concavity holds in a uniform boundary collar and near the Euclidean endpoint.
\begin{lemma}[Space-time boundary exclusion]
\label{lem:space-time-boundary}
Let $\{g_t\}_{0\leq t\leq T}$ be a family of Riemannian
metrics on $\overline{B_1(0)}$ which is continuous in the
$\mathcal C^\infty$-topology on $[0,T]$ and smooth on
$\overline{B_1(0)}\times[0,T)$. Suppose that
$\partial B_1(0)$ is strictly convex with respect to $g_t$ for every $0<t\leq T$ and that $g_T=g_{\mathrm{Euc}}$. Then there exists an open
neighborhood $\mathcal U$ of
$$
\bigl(\partial B_1(0)\times(0,T]\bigr)
\cup
\bigl(\overline{B_1(0)}\times\{T\}\bigr)
$$
in $\overline{B_1(0)}\times[0,T]$ such that the first Dirichlet
eigenfunction $u_t$ of $g_t$ is strictly log-concave on
$$
\mathcal U\cap\bigl(B_1(0)\times[0,T]\bigr).
$$
\begin{proof}
Since the first eigenvalue is simple, standard elliptic regularity
and smooth dependence imply that $u_t$ depends smoothly on $(x,t)$ in $\overline{B_1(0)}\times [0,T)$ with normalization $||u_t||_{L^2(B,g_t)}\equiv 1$; see \cite{MR1814364,MR2160744}. Since $g_{t}\to g_{euc.}$ as $t\to T$ in $\mathcal{C}^\infty$-topology, there is $\epsilon>0$ such that the first eigenfunction is strictly log-concave for every $t\geq T-\epsilon$. On the other hand, by the Hopf-boundary lemma and Lemma 3.4 in \cite{MR3960269}, for every $t>0$, there is an open neighborhood $\mathcal{U}_{t}$ of $\partial B_1(0)\times (t,T-\frac{\epsilon}{2}]$ such that 
$u_t$ is strictly log-concave on $\mathcal{U}_{t}$, Therefore, $\mathcal{U}:=\left(\bigcup_{t>0}\mathcal{U}_t\right)\cup \overline{B_1(0)}\times (T-\epsilon,T]$ meets the requirement.
\end{proof}

\end{lemma}

We conclude the section with the following reduction from failure of strict log-concavity to a null contact.
\begin{proposition}\label{reduction to null contact}
Let $\Omega\subset\mathbb{CP}^n$ be a smooth domain with convex boundary, and assume that its first Dirichlet eigenfunction is not strictly log-concave. Then there exists $t_0\in[0,T)$ such that the first Dirichlet eigenfunction $u_{t_0}$ on the domain $\Omega_{t_0}$ from Proposition~\ref{MCF convergence} is weakly log-concave and has a null direction. More precisely,
\(
\operatorname{Hess}\log u_{t_0}\leq 0
\)
on $\Omega_{t_0}$, and there exist $p\in\Omega_{t_0}$ and a nonzero vector $X\in T_p\mathbb{CP}^n$ such that
\[
\operatorname{Hess}\log u_{t_0}(X,X)=0.
\]
\begin{proof}
If the first eigenfunction on $\Omega$ is already weakly
log-concave, then the failure of strict log-concavity gives a null
direction, and we may take $t_0=0$. Otherwise, there is a unit vector $X\in T_p\mathbb{CP}^n$ such that $\operatorname{Hess}\log u_0(X,X)>0$, and it remains true on $\Omega_\tau$ for all sufficiently small $\tau>0$.

Fix such a $\tau$. Since $\partial\Omega_t$ is strictly convex for every
$t>0$, the metric deformation from $\Omega_\tau$ constructed in Proposition~\ref{metric deformation} satisfies the hypotheses of
Lemma~\ref{lem:space-time-boundary}, and hence  possible null
contacts are excluded near the spatial boundary and near the
Euclidean endpoint. Hence, after removing these regions, all possible null contact points lie in a compact subset of space-time. Let $t_0\in[0,T)$ be the smallest parameter such that
\(
\operatorname{Hess}_{g_{t_0}}\log u_{t_0}\leq0
\)
throughout the interior of domain. Such an $t_0$ exists by continuity and the
strict log-concavity at the Euclidean endpoint. Moreover,
$\operatorname{Hess}_{g_{t_0}}\log u_{t_0}$ cannot be negative
definite on the interior of $\Omega_{t_0}$. Otherwise, by compactness and smooth
dependence, the same strict inequality would hold for parameters
slightly smaller than $t_0$, contradicting its minimality. Thus
there is an interior null contact at $t_0$. Since $(B_1(0),g_{t})$ is isometric to $(\Omega_{t},g_{FS})$ up to a scaling, the first Dirichlet eigenfunction $u_{t_)}$ on $\Omega_t$ is weakly log-concave and has a null contact  eigenvector $X$.
\end{proof}
\end{proposition}
This reduction preserves invariant under ambient isometries, by uniqueness of the mean curvature flow and of the positive normalized first Dirichlet eigenfunction.

\section{Exclusion of null contacts}\label{sec:toric-log-concavity}

By Proposition~\ref{reduction to null contact}, it remains to establish
the strictification. We therefore assume that \(u\) is weakly
log-concave and consider a hypothetical interior null contact of
\(H:=\operatorname{Hess}v\), where \(v:=\log u\). In Section~\ref{Tn invariant domain}, we show that the null space at a
contact point is one-dimensional and use the toric symmetry to place
the null direction in the tangent space of the torus orbit~(Lemma~\ref{lem:null-line-orbit}). Since
\(v\) is constant along each orbit, the Hessian in orbit directions can
then be expressed in terms of \(\nabla v\) and the second fundamental
form of the orbit. In Section~\ref{cohomogeneity two domain}, under the
stronger \(U(1)\times U(n-1)\)-symmetry, we compute this second
fundamental form explicitly~(Lemma~\ref{lem:II-principal}) in the null direction and thereby exclude
null contact when \(n=2\). Finally, in
Section~\ref{further analysis in higher dimension}, we refine this
analysis in higher dimensions and establish a gradient estimate
sufficient to exclude null contact for \(n=3,4\). Most of our argument applies to $U(1)\times U(n-1)$-invariant domains, except for inequality~(\eqref{eq:reduced-maximal-eigenvalue}) and hence the exclusion of null contact (Proposition~\ref{prop:principal-exclusion-CP34}).

\subsection{$\mathbb{T}^n$-invariant Domains}\label{Tn invariant domain}

Given a unit null contact vector $X$, it is natural to consider
$(\Delta H)(X,X)$. Extend $X$ locally to a vector field
$\widetilde X$ with $\nabla \widetilde X(p)=0$. Since $H\leq 0$ and
$X\in\ker H_p$, we have $H_p(X,\cdot)=0$. Hence, at $p$,
\[
\Delta\bigl(H(\widetilde X,\widetilde X)\bigr)
=(\Delta H)(X,X).
\]
Since $H(\widetilde X,\widetilde X)\leq0$ near $p$ and vanishes at
$p$, it follows that $(\Delta H)(X,X)\leq0$. Applying the contact
calculation in \cite[proof of Theorem~2.1]{MR4870313} with $b=0$,
and using the curvature formula in Section~2 and
\(
\operatorname{tr}H=-\lambda-|\nabla v|^2,
\)
we obtain
\begin{equation}\label{null contact equality}
0\leq-\frac12(\Delta H)(X,X)
=\operatorname{tr}\left((H+dv\otimes dv)\circ R_X\right)=-\lambda+3H(JX,JX)-dv(X)^2+3dv(JX)^2,
\end{equation}
which leads to the following simple-nullity result.
\begin{lemma}[Simple nullity at a null contact]
\label{lem:simple-nullity of H}
Let $u>0$ solve $-\Delta u=\lambda u$, with $\lambda>0$, on an open
subset of $\CP^n$. If $H\le0$ near $p$ and $\ker H_p\ne\{0\}$,
then $\dim\ker H_p=1$. Moreover, every nonzero $X\in\ker H_p$
satisfies $dv_p(JX)\ne0$.
\begin{proof}
After normalization, if $dv(JX)=0$ for some unit $X\in\ker H_p$, then
\eqref{null contact equality} and $H\leq0$ give
\[
0\leq-\lambda+3H(JX,JX)-dv(X)^2<0,
\]
a contradiction. Thus $X\mapsto dv(JX)$ has trivial kernel on
$\ker H_p$, and hence $\dim\ker H_p=1$.
\end{proof}
\end{lemma}

Simple nullity makes the null direction unique, so every isometric symmetry fixing $p$ preserving \(u\), and hence \(H\), must preserve this line.
Inspired by Pacini's work~\cite{MR1974691} on mean curvature flow of group orbits, we now use the standard toric symmetry to determine its direction:
$$
(e^{i\theta_1},\ldots,e^{i\theta_n})\cdot[z_0:z_1:\cdots:z_n]
=
[z_0:e^{i\theta_1}z_1:\cdots:e^{i\theta_n}z_n],
$$
which acts isometrically since the Fubini-Study metric is $U(n+1)$-invariant. Writing \(\mathcal O_p=\mathbb T^n\cdot p\) for the torus orbit through \(p\), we show that toric invariance already forces the null line to lie in \(T_p\mathcal O_p\).

\begin{lemma}[The null line is tangent to the orbit]
\label{lem:null-line-orbit}
Let $u>0$ be a $\mathbb T^n$-invariant solution of
$-\Delta u=\lambda u$, with $\lambda>0$, on a torus-invariant open
subset of $\CP^n$. If $H\le0$ near $p$, then
$\ker H_p\subset T_p\mathcal O_p$.
\begin{proof}
If $\ker H(p)=\{0\}$, then this lemma holds trivially. Assume $0\neq X\in \ker H(p)$.
Move $p$ into the real locus $\RP^n$ by the torus action. Notice that complex conjugation
$$
\sigma[z_0:\cdots:z_n]=[\overline z_0:\cdots:\overline z_n]
$$
acts as the reflection across $\RP^n$.
Since \(\sigma\) preserves every torus orbit, it preserves \(u\), \(H\), and hence the unique null line
$\mathbb RX$ given by Lemma~\ref{lem:simple-nullity of H}. Hence
$d\sigma_pX=\pm X$. If $d\sigma_pX=X$, then, since $\sigma$ is
anti-holomorphic,
\(
d\sigma_p(JX)=-JX.
\)
Because $v$ is $\sigma$-invariant, this would imply $dv(JX)=0$,
contrary to Lemma~\ref{lem:simple-nullity of H}. Therefore
\(
d\sigma_pX=-X.
\) Thus the null direction is normal to $\mathbb{RP}^n$.

We next use a second reflection to show that the null direction is tangent to the coordinate projective subspace determined by the nonzero coordinates of \(p\). After relabelling coordinates,
write
\(
p=[r_0:\cdots:r_k:0:\cdots:0]
\)
for $r_j\ne0$,
and let $\Sigma\cong\CP^k$ be the coordinate projective subspace
spanned by the first $k+1$ coordinates. When $k<n$, consider the
holomorphic reflection
$$
\rho[z_0:\cdots:z_n]
=
[z_0:\cdots:z_k:-z_{k+1}:\cdots:-z_n].
$$
At $p$, its differential is $+I$ on $T_p\Sigma$ and $-I$ on
$N_p\Sigma$. Again $\rho$ preserves $u$, hence $H$ and its unique
null line. If $d\rho_pX=-X$, then holomorphicity gives
$d\rho_p(JX)=-JX$, and $\rho$-invariance of $v$ again forces
$dv(JX)=0$, a contradiction. Hence
\(
X\in T_p\Sigma.
\)

Thus \(X\) is tangent to \(\Sigma\) and normal to the real locus. Since
\(
\Sigma\cap\RP^n\cong\RP^k,
\)
it follows that \(X\) is normal to \(\RP^k\) inside \(\Sigma\cong\CP^k\). At \(p\), where all the first \(k+1\) homogeneous coordinates are nonzero, this normal space is precisely generated by the phase variations of these coordinates, and hence equals \(T_p\mathcal O_p\). Therefore
\(
X\in T_p\mathcal O_p,
\)
as claimed.
\end{proof}
\end{lemma}

Since \(v\) is constant along each torus orbit, $\nabla v$ is normal to orbits, and hence the Hessian in an orbit direction is expressed in terms of the second fundamental form of the orbit:
\begin{equation}\label{vertical Hessian formula}
H(A,B)=\langle  \nabla_{A}\nabla v,B\rangle=A\langle \nabla v,B \rangle-\langle \nabla v,\nabla_{A}B \rangle=-\langle \nabla v,\mathds{I}(A,B) \rangle
\end{equation}
where $A,B\in T_p\mathcal{O}_p$ and \(\mathds{I}(A,B):=(\nabla_A B)^\perp\). Thus, the comparison with a torus-invariant flowing hypersurface through the torus orbit reduces to identifying the direction of \(\nabla v\), yielding the following exclusion. This idea is partly motivated by Harvie's work on rotationally symmetric inverse mean curvature flow\cite{MR4568175}.
\begin{proposition}[Exclusion on zero- and one-dimensional orbits]
\label{lem:singular-orbit-exclusion}
Let $\Omega\subset\CP^n$, $n\ge2$, be a connected torus-invariant
smooth domain with convex boundary, and let $u>0$ be a torus-invariant
solution of $-\Delta u=\lambda u$ on $\Omega$, with $\lambda>0$.
If $H\le0$ near $p\in\Omega$ and $\dim\mathcal O_p\le1$, then
$H_p<0$.
\begin{proof}
Suppose $p$ is a null contact. Lemma~\ref{lem:null-line-orbit}
excludes a zero-dimensional orbit. If $\dim\mathcal O_p=1$, take
a unit null vector $X\in T_p\mathcal O_p$.
Lemma~\ref{lem:simple-nullity of H} gives $\nabla v(p)\ne0$.

To exploit the vertical Hessian formula, we use the mean curvature flow to produce a strictly convex torus-invariant hypersurface through \(p\).
By Propositions~\ref{MCF stricification} and~\ref{MCF convergence},
the flow from $\partial\Omega$ is strictly convex at positive times
and shrinks to a point $q$. Uniqueness preserves toric symmetry, so
$q$ is fixed by the torus action and hence $p\ne q$. Since the flow is
mean convex, the enclosed domains are nested and shrink to $q$;
therefore $p$ lies on a strictly convex flowing hypersurface
$\Sigma_t$ for some $t>0$. 

Let $\nu$ be the outward unit normal of $\Sigma_t$ at $p$. By the same
reflection argument in the Proof of Lemma~\ref{lem:null-line-orbit}, $\nabla v(p)$ and $\nu$ lie in the same direction. Hence
$\nabla v(p)=a\nu$ for some $a\ne0$. Therefore, by strict convexity and \eqref{vertical Hessian formula},
\[
0=H(X,X)
=-\langle\nabla v,\mathds I_{\mathcal O_p}(X,X)\rangle
=-\langle a\nu,\mathds{I}_{\Sigma_t}(X,X) \rangle \ne0,
\]
a contradiction.
\end{proof}
\textit{Remark.}
The same argument also applies to convex geodesic balls in $\CP^n$ in
every dimension. In this case, symmetry already determines the direction
of $\nabla v$, so the preceding comparison with mean curvature flow
rules out vertical null contacts. Together with
Proposition~\ref{reduction to null contact}, this gives strict vertical
log-concavity without any ODE analysis.
\end{proposition}

    On the principal orbit stratum, the direction of \(\nabla v\) is no longer determined by symmetry alone, so the preceding comparison with mean curvature flow is no longer available. We therefore return to the null-contact inequality~\ref{null contact equality}. Under the assumption of toric invariance, Lemma~\ref{lem:null-line-orbit} gives $dv(X)=0$. Consequently, the inequality can be expressed as follows:
\begin{equation}\label{KNTW inequality in Hess u}
0\leq -\lambda+3H(JX,JX)-dv(X)^2+3dv(JX)^2
=
-\lambda
+3\left(\frac{\Hess u}{u}\right)(JX,JX).
\end{equation}
To access this \(JX\)-Hessian term, we use \(\mathds I_{\mathcal O_p}(X,X)\) as an intermediate geometric vector. The next lemma relates the Hessian of \(u\) along this vector to the ambient curvature, providing the link for the subsequent comparison with \(JX\).

\begin{lemma}[Orbit-curvature identity]
\label{lem:Hessian=Rm}
Let $u>0$ be a $\mathbb T^n$-invariant smooth function on a
torus-invariant open subset of $\CP^n$, and set $v=\log u$ and
$H=\Hess v$. Suppose $H\leq0$ near $p$ and
$X\in T_p\mathcal O_p$ is a unit null vector of $H_p$. Then, for every
$Y\in T_p\CP^n$,
\[
\left(-\frac{\Hess u}{u}\right)(\mathds{I}_{\mathcal{O}_p}(X,X),Y)
=
\operatorname{Rm}(\nabla v,X,X,Y),
\]
where $\mathds{I}_{\mathcal{O}_p}$ denotes the second fundamental form of the orbit.
\begin{proof}
Extend $X$ as the corresponding fundamental vector field of the torus
action. Since the induced metric on $\mathcal O_p$ is translation
invariant, 
the restriction of \(X\) to \(\mathcal O_p\) is parallel with respect to the induced flat metric, so \(\nabla_XX=\mathds I_{\mathcal O_p}(X,X)\). Since $H$ is torus-invariant, $\mathcal L_XH=0$,
and hence
\[
    X(H(Y,X))=(\mathcal{L}_XH)(Y,X)+H([X,Y],X)+H(Y,[X,X])=0
\]
at $p$. Using the Leibniz rule, we obtain
\[
H(Y,\mathds{I}_{\mathcal{O}_p}(X,X))=
H(Y,\nabla_XX)=-(\nabla_XH)(Y,X)
  -H(\nabla_YX,X)=-(\nabla_XH)(Y,X).
\]
Differentiating the local maximum \(H(X,X)=0\) at \(p\) gives
\((\nabla_YH)(X,X)=0\). Applying the Ricci identity for the Hessian,
$(\nabla_XH)(Y,X)-(\nabla_YH)(X,X) =\operatorname{Rm}(X,Y,\nabla v,X),$
we obtain
\[
-H(Y,\mathds{I}_{\mathcal{O}_p}(X,X))
=\operatorname{Rm}(X,Y,\nabla v,X)=\operatorname{Rm}(\nabla v,X,X,Y).
\]
By \eqref{vertical Hessian formula},
$dv(\mathds{I}_{\mathcal{O}_p}(X,X))=0$, and hence
$\Hess u/u=H+dv\otimes dv$ gives the stated equality.
\end{proof}
\end{lemma}

\subsection{$U(1)\times U(n-1)$-invariant Domain}\label{cohomogeneity two domain}

To determine \(\mathds I_{\mathcal O_p}(X,X)\) explicitly, we now impose \(U(1)\times U(n-1)\)-symmetry. Every \(U(1)\times U(n-1)\)-orbit meets the real locus of \(\Sigma:=\{z_3=\cdots=z_n=0\}\cong\CP^2,
\), so it suffices to consider a contact point \(p\in\Sigma\cap\RP^n\cong\RP^2\). At such a representative, the associated torus orbit is two-dimensional precisely on the principal stratum; otherwise its dimension is at most one, and the null contact is already excluded by Proposition~\ref{lem:singular-orbit-exclusion}. We may therefore restrict attention to the principal case, where the residual isotropy \(U(n-2)\) acts on \(N_p\Sigma\).

\begin{lemma}[Orbit second fundamental form]
\label{lem:II-principal}
Let \(n\ge2\), and let \(u>0\) be a \(U(1)\times U(n-1)\)-invariant
smooth function satisfying \(\Delta u=-\lambda u\) on an invariant open
subset of \(\CP^n\). Suppose \(H\le0\) near a
null contact \(p\in \Sigma\cap \mathbb{RP}^n\) on the principal stratum.
Let $X$ be a unit vector spanning $\ker H_p$, and choose $Y$ so that
$\{X,Y\}$ is an orthonormal basis of $T_p\mathcal{O}_p$. Then $dv(JY)\neq 0$, and
\[
\mathds I_{\mathcal O_p}(X,X)
=
\frac{dv(JY)}
{dv(JX)\,(-H(Y,Y))}
\bigl(dv(JY)JX-dv(JX)JY\bigr),
\]
where $dv(JX)$ and $-H(Y,Y)$ are non-zero by Lemma~\ref{lem:simple-nullity of H}.
\begin{proof}
We first reduce both the gradient and the torus-orbit geometry at the contact point to the totally geodesic coordinate subspace \(\Sigma\cong\CP^2\). If $n=2$, then
$\Sigma=\CP^2$. If $n\ge3$, the isotropy $U(n-2)$ fixes $T_p\Sigma$
and acts in the standard way on $N_p\Sigma$, so invariance gives
$\nabla v\in T_p\Sigma$. On the other hand, at the chosen representative,
$\mathcal O_p=\mathbb T^n\cdot p=\mathbb T^2\cdot p\subset\Sigma$, it is a principal two-dimensional
torus orbit in $\Sigma\cong\CP^2$, and hence is flat and Lagrangian
in $\Sigma$. Since $\Sigma$ is totally geodesic, the second
fundamental form of $\mathcal O_p$ in $\Sigma$ agrees with
$\mathds I_{\mathcal O_p}$ in $\CP^n$.

We now translate the null-contact conditions into a system of equations for the second fundamental form of the torus orbit, using its Lagrangian geometry in \(\Sigma\). Since \(v\) is torus-invariant, \(\nabla v\) is orthogonal to \(T_p\mathcal O_p\). As \(\mathcal O_p\) is Lagrangian in \(\Sigma\), \(\{JX,JY\}\) is an orthonormal basis of its normal space in \(\Sigma\), and hence
$$
\nabla v=dv(JX)JX+dv(JY)JY.
$$
On the other hand, since \(X\) is an eigenvector and \(T_p\mathcal O_p\) is two-dimensional, \(\{X,Y\}\) forms an orthonormal eigenbasis of \(H|_{T_p\mathcal O_p}\). Moreover, by \eqref{vertical Hessian formula},
\(
H(A,B)=-\langle \nabla v,\mathds I_{\mathcal O_p}(A,B)\rangle
=C(A,B,J\nabla v),
\) for every $A,B\in T_p\mathcal O_p$,
where \(C(A,B,C):=\langle\mathds I_{\mathcal O_p}(A,B),JC\rangle\) is totally symmetric for Lagrangian submanifolds; see \cite[p.~2]{MR2776467}.
Combining these facts, we obtain
\begin{align*}
dv(JX)\mathds I_{\mathcal O_p}(X,X)
+dv(JY)\mathds I_{\mathcal O_p}(X,Y)&=0,\\
dv(JX)\mathds I_{\mathcal O_p}(X,Y)
+dv(JY)\mathds I_{\mathcal O_p}(Y,Y)
&=(-H(Y,Y))JY.
\end{align*}

If $dv(JY)=0$, the first identity gives
$\mathds I_{\mathcal O_p}(X,X)=0$, whereas
Lemma~\ref{lem:Hessian=Rm} gives
$R_X(\nabla v)=4\,dv(JX)JX\ne0$, a contradiction. Hence
$dv(JY)\ne0$. Therefore,
\begin{equation}\label{second fundamental form equation XX XY}
\mathds I_{\mathcal O_p}(X,Y)
=
-\frac{dv(JX)}{dv(JY)}
\mathds I_{\mathcal O_p}(X,X)
\end{equation}
\[
\mathds I_{\mathcal O_p}(Y,Y)
=
\frac{-H(Y,Y)}{dv(JY)}JY
+
\frac{dv(JX)^2}{dv(JY)^2}
\mathds I_{\mathcal O_p}(X,X).
\]
Since $\mathcal O_p$ is flat and Lagrangian in $\Sigma$, the Gauss-Codazzi
equation gives
\[
0
=
1+\left\langle
\mathds I_{\mathcal O_p}(X,X),
\mathds I_{\mathcal O_p}(Y,Y)
\right\rangle
-\left|\mathds I_{\mathcal O_p}(X,Y)\right|^2 
=
1+\frac{-H(Y,Y)}{dv(JY)}
\left\langle
\mathds I_{\mathcal O_p}(X,X),JY
\right\rangle.
\]
Thus
\(
\left\langle
\mathds I_{\mathcal O_p}(X,X),JY
\right\rangle
=
\frac{dv(JY)}{H(Y,Y)}.
\)
On the other hand, \eqref{vertical Hessian formula} and $H(X,X)=0$
give
$dv(\mathds I_{\mathcal O_p}(X,X))=0$. Since
$\mathds I_{\mathcal O_p}(X,X)\in\operatorname{span}\{JX,JY\}$,
these two identities determine it completely and yield
\[
\mathds I_{\mathcal O_p}(X,X)
=
\frac{dv(JY)}
{dv(JX)\,(-H(Y,Y))}
\bigl(dv(JY)JX-dv(JX)JY\bigr),
\]
as claimed.
\end{proof}
\end{lemma}

By combining the preceding two lemmas, we can explicitly determine the Hessian term $\mathrm{Hess}u/u(JX,JX)$ in \ref{KNTW inequality in Hess u}.
\begin{proposition}[Normal Hessian formula at a principal null contact]
\label{prop:JX-Hessian-principal}
Under the assumptions of Lemma~\ref{lem:II-principal}, let
$\{X,Y\}$ be the orthonormal basis of $T_p\mathcal O_p$ chosen there.
Then
\[
\left(-\frac{\Hess u}{u}\right)(JX,JX)
=
\frac{dv(JX)^2}
{dv(JX)^2+dv(JY)^2}
\left(
\lambda
+\operatorname{tr}_{N_p\Sigma}H
-4H(Y,Y)
\right).
\]
\begin{proof}
Lemma~\ref{lem:Hessian=Rm} and~\ref{lem:II-principal}  give an implicit relation among the Hessian components. The latter expresses
$\mathds I_{\mathcal O_p}(X,X)$ in terms of $JX$ and $JY$, while the former identifies the action of $-\Hess u/u$ on this vector with $R_X(\nabla v)$. Since
$\nabla v=dv(JX)JX+dv(JY)JY$, the curvature formula gives
$R_X(\nabla v)=4dv(JX)JX+dv(JY)JY$. Taking the $JX$- and
$JY$-components therefore yields
\begin{align*}
\frac{dv(JY)}
{dv(JX)(-H(Y,Y))}
\biggl[
dv(JY)
\left(-\frac{\Hess u}{u}\right)(JX,JX)
-
dv(JX)
\left(-\frac{\Hess u}{u}\right)(JX,JY)
\biggr]
&=
4\,dv(JX),
\\
\frac{dv(JY)}
{dv(JX)(-H(Y,Y))}
\biggl[
dv(JY)
\left(-\frac{\Hess u}{u}\right)(JX,JY)
-
dv(JX)
\left(-\frac{\Hess u}{u}\right)(JY,JY)
\biggr]
&=
dv(JY).
\end{align*}

To eliminate the mixed Hessian term, we take a suitable linear
combination of the two component equations above. We multiply
the first equation by $dv(JY)$ and the second equation
by $dv(JX)$. Moving the common coefficient on the left-hand side to
the right-hand side, we obtain
\begin{align*}
dv(JY)^2
\left(-\frac{\Hess u}{u}\right)(JX,JX)
-
dv(JX)dv(JY)
\left(-\frac{\Hess u}{u}\right)(JX,JY)
&=
4\,dv(JX)^2(-H(Y,Y)),
\\
dv(JX)dv(JY)
\left(-\frac{\Hess u}{u}\right)(JX,JY)
-
dv(JX)^2
\left(-\frac{\Hess u}{u}\right)(JY,JY)
&=
dv(JX)^2(-H(Y,Y)).
\end{align*}
Adding these two equations cancels the mixed term and gives
\begin{equation}\label{no mixed term}
dv(JY)^2
\left(-\frac{\Hess u}{u}\right)(JX,JX)
=
dv(JX)^2
\left[
\left(-\frac{\Hess u}{u}\right)(JY,JY)
-5H(Y,Y)
\right].
\end{equation}

It remains to solve
\(\left(-\frac{\Hess u}{u}\right)(JY,JY),
\)
where we use the trace identity coming from the eigenfunction equation. Since
$\{X,Y,JX,JY\}$ is an orthonormal basis of $T_p\Sigma$,
$H(X,X)=0$, and $dv(X)=dv(Y)=0$, we have
\[
\operatorname{tr}_{T_p\Sigma}\left(-\frac{\Hess u}{u}\right)
=
\left(-\frac{\Hess u}{u}\right)(JX,JX)
+
\left(-\frac{\Hess u}{u}\right)(JY,JY)
-H(Y,Y).
\]
On the other hand, $\nabla v\in T_p\Sigma$, so $dv$ vanishes on
$N_p\Sigma$. Hence, using
$\Hess u/u=H+dv\otimes dv$ together with
$-\Delta u=\lambda u$, the left-hand side equals $\lambda+\operatorname{tr}_{N_p\Sigma}H.$ Therefore,
\begin{equation}\label{eq JY}
\left(-\frac{\Hess u}{u}\right)(JY,JY)
=
\lambda+\operatorname{tr}_{N_p\Sigma}H
-
\left(-\frac{\Hess u}{u}\right)(JX,JX)
+
H(Y,Y).
\end{equation}
Combining \ref{no mixed term} and \ref{eq JY} together and 
collecting the $JX$-Hessian terms gives
\[
\left(-\frac{\Hess u}{u}\right)(JX,JX)
=
\frac{dv(JX)^2}
{dv(JX)^2+dv(JY)^2}
\left(
\lambda+\operatorname{tr}_{N_p\Sigma}H
-4H(Y,Y)
\right).
\]
\end{proof}
\end{proposition}
In $\CP^2$, the preceding formula immediately excludes principal null contacts.
\begin{proposition}[Exclusion of principal null contacts in $\CP^2$]
\label{prop:principal-exclusion-CP2}
Let $u>0$ be a $\mathbb T^2$-invariant solution of
$-\Delta u=\lambda u$, with $\lambda>0$, on a torus-invariant
open subset of $\CP^2$. If $H\le0$ near a point $p$ on a principal
torus orbit, then $H_p<0$.

\begin{proof}
Suppose that $p$ is a null contact. Choose an orthonormal basis
$\{X,Y\}$ of $T_p\mathcal O_p$ as in
Lemma~\ref{lem:II-principal}. Since $n=2$, we have
$N_p\Sigma=\{0\}$, and Proposition~\ref{prop:JX-Hessian-principal}
gives
\[
\left(-\frac{\Hess u}{u}\right)(JX,JX)
=
\frac{dv(JX)^2}
{dv(JX)^2+dv(JY)^2}
\bigl(\lambda-4H(Y,Y)\bigr),
\]
where $H(Y,Y)<0$ by Lemma~\ref{lem:simple-nullity of H}. Thus, $
\left(-\frac{\Hess u}{u}\right)(JX,JX)>0$, and hence 
\[
    -\lambda+3\left(\frac{\Hess u}{u}\right)(JX,JX)<0,
\]
which contradicts to (\ref{KNTW inequality in Hess u}). Therefore, there is no contact point $p$, and $H_p<0$.
\end{proof}
\end{proposition} 

\subsection{The exclusion of Null-Contacts when $n=3$ or $4$}\label{further analysis in higher dimension}

The argument in Proposition~\ref{prop:principal-exclusion-CP2} no longer
gives an immediate contradiction when $n>2$, because the additional
normal contribution $\operatorname{tr}_{N_p\Sigma}H<0$ enters the
Hessian formula. We therefore refine the null-contact argument by
tracking the maximal eigenvalue $\mu_1$ of $H$, rather than testing
$H$ along a fixed local extension of the null vector. 
This is essentially the same idea used by Wei--Xiao\cite{wei2025superlogconcavityeigenfunctionshoroconvex}, with the determinant in place of the maximal eigenvalue.
As the following
proposition shows, this introduces a correction term accounting for
the variation of the corresponding eigendirection.



\begin{proposition}[First variation at a null contact]
\label{prop:full-maximal-eigenvalue-equation}
Let $v=\log u$ and $H=\Hess v$, and suppose that $p$ is a null
contact.  Let $\mu_1$ be the largest eigenvalue of $H$ near $p$ and
extend the unit null vector $X$ locally as its eigenvector field.
Choose an orthonormal eigenbasis
$e_1=X,e_2,\ldots,e_{2n}$ of $H_p$, with eigenvalues
$0=\mu_1>\mu_2\geq\cdots\geq\mu_{2n}$.  Then, for every
$V\in T_p\CP^n$,
\[
\nabla_VX
=
\sum_{\alpha=2}^{2n}
\frac{(\nabla_VH)(X,e_\alpha)}{-\mu_\alpha}\,e_\alpha.
\]
Moreover,
\[
\mathcal E_X
:=
\frac12\left(\Delta\mu_1(p)-(\Delta H)(X,X)\right)
=-\sum_{i=1}^{2n}
H(\nabla_{e_i}X,\nabla_{e_i}X)\geq0.
\]
\end{proposition}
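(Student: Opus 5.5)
The plan is standard first and second order perturbation theory for a simple eigenvalue. By Lemma~\ref{lem:simple-nullity of H}, $\mu_1(p)=0$ is a simple eigenvalue of $H_p$, so $\mu_1>\mu_2$ at $p$. By continuity of eigenvalues the gap persists on a neighborhood of $p$. There $\mu_1$ is smooth, and the unit eigenvector field $X$ (with sign fixed by continuity) is smooth by the implicit function theorem applied to $(H-\mu)X=0$, $|X|^2=1$. On this neighborhood we have
\[
HX=\mu_1X,\qquad \langle X,X\rangle=1.
\]

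For the first-variation formula:
\begin{enumerate}[(i)]
\item Differentiate $HX=\mu_1X$ covariantly in a direction $V$:
\[
(\nabla_VH)(X)+H(\nabla_VX)=(V\mu_1)X+\mu_1\nabla_VX.
\]
\item Differentiating $|X|^2=1$ gives $\langle\nabla_VX,X\rangle=0$, so $\nabla_VX=\sum_{\alpha\ge2}c_\alpha e_\alpha$.
\item At $p$, pair the differentiated equation with $e_\alpha$ for $\alpha\ge2$, using $\mu_1(p)=0$ and $He_\alpha=\mu_\alpha e_\alpha$. This gives $(\nabla_VH)(X,e_\alpha)+\mu_\alpha c_\alpha=0$.
\item Hence $c_\alpha=(\nabla_VH)(X,e_\alpha)/(-\mu_\alpha)$, with $-\mu_\alpha>0$. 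This is the claimed formula for $\nabla_VX$.
\end{enumerate}

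For the second-order identity, write $\mu_1=H(X,X)$ on the neighborhood. Use a normal frame $\{e_i\}$ at $p$ and expand:
\[
\Delta\mu_1=(\Delta H)(X,X)+4\sum_i(\nabla_{e_i}H)(X,\nabla_{e_i}X)+2\sum_iH(\nabla_{e_i}X,\nabla_{e_i}X)+2\sum_iH(X,\nabla_{e_i}\nabla_{e_i}X).
\]
\begin{enumerate}[(i)]
\item The last term vanishes at $p$, since $H_p(X,\cdot)=\mu_1(p)\langle X,\cdot\rangle=0$.
\item For the cross term, pair the differentiated eigen-equation with $\nabla_{e_i}X$ instead of $e_\alpha$. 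At $p$ this gives $(\nabla_{e_i}H)(X,\nabla_{e_i}X)=-H(\nabla_{e_i}X,\nabla_{e_i}X)$, using $\mu_1(p)=0$ and $\langle X,\nabla X\rangle=0$.
\item Substituting, the cross term contributes $-4\sum_iH(\nabla_{e_i}X,\nabla_{e_i}X)$. Combined with the $+2\sum_i$ term this leaves
\[
\Delta\mu_1=(\Delta H)(X,X)-2\sum_iH(\nabla_{e_i}X,\nabla_{e_i}X).
\]
\item Halving gives the identity for $\mathcal E_X$. Nonnegativity follows from $H_p\le0$.
\end{enumerate}
Explicitly, each summand equals $\sum_\alpha \mu_\alpha c_{i\alpha}^2$, which is $\le 0$, so $\mathcal E_X=\sum_{i,\alpha}(\nabla_{e_i}H)(X,e_\alpha)^2/(-\mu_\alpha)\ge0$.

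The main obstacle is the regularity step. One must ensure $\mu_1$ and $X$ are genuinely smooth near $p$, which is where simple nullity is essential. One must also handle the second-derivative term $\nabla\nabla X$ correctly; it only drops out because $X(p)$ lies in the kernel of $H_p$. Everything else is bookkeeping with the normal frame.
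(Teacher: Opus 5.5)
Your proposal is correct and follows essentially the paper's route: differentiate the eigen-relation $H(X,\cdot)=\mu_1\langle X,\cdot\rangle$, pair with $e_\alpha$ to obtain $\nabla_VX$, and pair with $\nabla_{e_i}X$ to turn the cross term into $-H(\nabla_{e_i}X,\nabla_{e_i}X)$. The only difference is bookkeeping: you expand $\Delta\bigl(H(X,X)\bigr)$ directly and discard $H(X,\nabla_{e_i}\nabla_{e_i}X)$ at $p$, whereas the paper first uses $d\mu_1=(\nabla H)(X,X)$ near $p$ so that only $2\sum_i(\nabla_{e_i}H)(X,\nabla_{e_i}X)$ appears; your regularity remark and the explicit formula $\mathcal E_X=\sum_{i,\alpha}(\nabla_{e_i}H)(X,e_\alpha)^2/(-\mu_\alpha)$ are valid extras.
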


\begin{proof}
We first differentiate the eigenvalue.  Since
$H(X,\cdot)=\mu_1\langle X,\cdot\rangle$ and $X$ has unit length,
differentiation in the direction $V$ gives
$d\mu_1(V)=(\nabla_VH)(X,X)+2H(\nabla_VX,X)$.
The last term is
$2\mu_1\langle\nabla_VX,X\rangle=0$, so $d\mu_1=(\nabla H)(X.X)$. Write
$\nabla_VX=\sum_{\alpha=2}^{2n}c_\alpha e_\alpha$ and extend each
$e_\alpha$ parallel at $p$.  Differentiating
$H(X,e_\alpha)=\mu_1\langle X,e_\alpha\rangle$ gives
$0=(\nabla_VH)(X,e_\alpha)+\mu_\alpha c_\alpha$.
This proves the first equality.

We now compute the Laplacian at a null contact. 
Differentiating $d\mu_1=(\nabla H)(X,X)$ once more and tracing yields
\[
\Delta\mu_1(p)
=(\Delta H)(X,X)
+2\sum_{i=1}^{2n}
(\nabla_{e_i}H)(X,\nabla_{e_i}X).
\]

We finally rewrite the remaining third-derivative term.  Since
$\nabla_{e_i}X\perp X$, differentiating
$H(X,\cdot)=\mu_1\langle X,\cdot\rangle$ and evaluating on
$\nabla_{e_i}X$ at the null contact gives
\begin{align*}
0
&=
\bigl(\nabla_{e_i}
  (H(X,\cdot)-\mu_1\langle X,\cdot\rangle)\bigr)
  (\nabla_{e_i}X)\\
&=
(\nabla_{e_i}H)(X,\nabla_{e_i}X)
+H(\nabla_{e_i}X,\nabla_{e_i}X)\\
&\qquad
-e_i(\mu_1)\langle X,\nabla_{e_i}X\rangle
-\mu_1|\nabla_{e_i}X|^2\\
&=
(\nabla_{e_i}H)(X,\nabla_{e_i}X)
+H(\nabla_{e_i}X,\nabla_{e_i}X).
\end{align*}
Here the last equality uses $\mu_1(p)=0$ and
$\langle X,\nabla_{e_i}X\rangle=0$.
Substitution proves the desired formula for $\mathcal{E}_X$
\end{proof}
Recall that $(\Delta H)(X,X)$ has been computed in
\eqref{null contact equality}, \eqref{KNTW inequality in Hess u},
and Proposition~\ref{prop:JX-Hessian-principal}. It remains to determine
the rotation term $\mathcal E_X$, which measures the variation of the
maximal unit eigenvector field $X$. Symmetry rigidly constrains this
variation; in particular, the $U(n-2)$-action on $N_p\Sigma$ gives
$H|_{N_p\Sigma}=-c$ for some $c>0$. We can therefore compute
$\mathcal E_X$ explicitly as follows.
\begin{proposition}[The rotation term under symmetry]
\label{prop:symmetric-rotation-energy}
At a principal null contact $p\in\Sigma$, let $\{X,Y\}$ be the
orthonormal basis defined in Lemma~\ref{lem:II-principal}.  If
$n\geq3$, then
\begin{equation}
\label{eq:E-explicit}
\mathcal E_X
=
\frac{4|\nabla v|^2
\bigl(|\nabla v|^2+3dv(JX)^2\bigr)}
{dv(JY)^2\,(-H(Y,Y))}
+
\frac{2(n-2)dv(JX)^2}{c}.
\end{equation}

\begin{proof}

Since $\mathcal E_X$ is the trace of the symmetric bilinear form
\(
(V,W)\longmapsto -H(\nabla_VX,\nabla_WX),
\)
we may compute it in any orthonormal frame. We choose
\(
\{X,Y,JX,JY,e_5,\ldots,e_{2n}\},
\)
where $e_5,\ldots,e_{2n}$ form an orthonormal basis of
$N_p\Sigma$.

To compute the contribution of $X$ and $Y$, we extend them to invariant vector fields along the flat $\mathbb{T}^2$-orbit. Then
$\nabla_XX=\mathds I_{\mathcal O_p}(X,X)$ and
$\nabla_YX=\mathds I_{\mathcal O_p}(X,Y)$.  By
(\ref{second fundamental form equation XX XY}),
\(
\mathds I_{\mathcal O_p}(X,Y)
=-\frac{dv(JX)}{dv(JY)}\mathds I_{\mathcal O_p}(X,X)
\).
Moreover, Lemma~\ref{lem:Hessian=Rm} and
$dv(\mathds I_{\mathcal O_p}(X,X))=0$, which follows from (\ref{vertical Hessian formula}), give
\[
-H\bigl(\mathds I_{\mathcal O_p}(X,X),
\mathds I_{\mathcal O_p}(X,X)\bigr)
=
\left\langle R_X(\nabla v),
\mathds I_{\mathcal O_p}(X,X)\right\rangle.
\]
Since
$\nabla v=dv(JX)JX+dv(JY)JY$ and
$R_X(\nabla v)=4dv(JX)JX+dv(JY)JY$,
Lemma~\ref{lem:II-principal} yields
\[
\left\langle R_X(\nabla v),
\mathds I_{\mathcal O_p}(X,X)\right\rangle
=3dv(JY)^2/(-H(Y,Y)).
\]  
Therefore
\(
-H(\nabla_XX,\nabla_XX)-H(\nabla_YX,\nabla_YX)
=
\frac{3|\nabla v|^2}{-H(Y,Y)}.
\)

We next compute the contribution of the $JX$--$JY$ directions.
Complex conjugation acts as $-1$ on $X$.  Differentiating this
equivariance shows that $\nabla_VX\in T_p\mathcal O_p$ whenever
$V\in JT_p\mathcal O_p$.  Since $X$ has unit length,
$\nabla_VX\perp X$, and hence $\nabla_VX\in\mathbb RY$.  By Proposition~\ref{prop:full-maximal-eigenvalue-equation} and the fact that $Y$ is an eigenvector of
$H$, we have
\[
\nabla_VX
=
\frac{(\nabla_VH)(X,Y)}{-H(Y,Y)}\,Y,
\qquad V\in\operatorname{span}\{JX,JY\}.
\]
It is therefore enough to compute $(\nabla_{JX}H)(X,Y)$ and $(\nabla_{JY}H)(X,Y)$.
By the Ricci identity, we have
\[
(\nabla_VH)(X,Y)
=(\nabla_YH)(X,V)+\operatorname{Rm}(V,Y,\nabla v,X).
\]
Because $Y$ is tangent to the torus orbit and $H$ is torus-invariant,
$\mathcal L_YH=0$.  Using $H(X,\cdot)=0$ and
$\nabla_XY=\mathds I_{\mathcal O_p}(X,Y)$, we obtain
\(
(\nabla_YH)(X,V)
=-H(\mathds I_{\mathcal O_p}(X,Y),V)
\) for every $V\in JT_p\mathcal{O}_p$.
Using
\(
\mathds I_{\mathcal O_p}(X,Y)
=-\frac{dv(JX)}{dv(JY)}\mathds I_{\mathcal O_p}(X,X)
\), Lemma~\ref{lem:Hessian=Rm} and~\ref{lem:II-principal} again, we have
\(
-H(\mathds I_{\mathcal O_p}(X,Y),JX)
=-\frac{4dv(JX)^2}{dv(JY)}\),
and
\(
-H(\mathds I_{\mathcal O_p}(X,Y),JY)
=-dv(JX).
\)
The curvature formula of $\CP^n$ also gives
\(
\operatorname{Rm}(JX,Y,\nabla v,X)=-dv(JY)
\) and
\(
\operatorname{Rm}(JY,Y,\nabla v,X)=-2dv(JX)
\).  Hence
\[
(\nabla_{JX}H)(X,Y)
=-\frac{dv(JY)^2+4dv(JX)^2}{dv(JY)},
\qquad
(\nabla_{JY}H)(X,Y)=-3dv(JX).
\]
Substitution into the formula in Proposition~\ref{prop:full-maximal-eigenvalue-equation} . Then we have
\[
-H(\nabla_{JX}X,\nabla_{JX}X)
-H(\nabla_{JY}X,\nabla_{JY}X)
=
\frac{|\nabla v|^2
\bigl(|\nabla v|^2+15dv(JX)^2\bigr)}
{dv(JY)^2\,(-H(Y,Y))}.
\]

We finally compute the contribution of the isotropy-normal
directions. Notice that the map
$V\mapsto(\nabla_VH)(X,\cdot)$ is $U(n-2)$-equivariant.  Since
$U(n-2)$ acts trivially on $T_p\Sigma$ and by its standard unitary
representation on $N_p\Sigma$, its $T_p^*\Sigma$-component vanishes,
and its normal component is a linear combination of
$\langle V,\cdot\rangle$ and $\langle JV,\cdot\rangle$.  It is thus
enough, for a unit $W\in N_p\Sigma$, to evaluate this covector on
$W$ and $JW$.

We compute these two values using the Ricci identity.  Extend $W$
parallelly. Then $W$ lie in the normal bundle along the $\Sigma$ since it is totally geodesic, and 
$(\nabla_XH)(W,W)=(\nabla_XH)(W,JW)=0$ because
$H|_{N_p\Sigma}=-c\,g_{\FS}$ is constant, Therefore,
\[
(\nabla_WH)(X,W)=\operatorname{Rm}(W,X,\nabla v,W)=0,
\qquad
(\nabla_WH)(X,JW)=\operatorname{Rm}(W,X,\nabla v,JW)=dv(JX),
\]
where the last equalities follow from the curvature formula (\ref{FS curvature formula}) of
$\CP^n$.  Thus
$(\nabla_WH)(X,\cdot)=dv(JX)\langle JW,\cdot\rangle$.
Proposition~\ref{prop:full-maximal-eigenvalue-equation} and
$H|_{N_p\Sigma}=-c\,g_{\FS}$ now give
$\nabla_WX=dv(JX)JW/c$.  Each complex normal line contributes
$2dv(JX)^2/c$, so
\(
-\sum_{i=5}^{2n}H(\nabla_{e_i}X,\nabla_{e_i}X)
=
\frac{2(n-2)dv(JX)^2}{c}.
\)

We conclude by combining the three orthogonal blocks computed above.
Since
$\mathcal E_X=-\sum_iH(\nabla_{e_i}X,\nabla_{e_i}X)$, we obtain
\begin{align*}
\mathcal E_X
={}&
\frac{3|\nabla v|^2}{-H(Y,Y)}
+
\frac{|\nabla v|^2
\bigl(|\nabla v|^2+15dv(JX)^2\bigr)}
{dv(JY)^2\,(-H(Y,Y))}
+
\frac{2(n-2)dv(JX)^2}{c}
\\
={}&
\frac{|\nabla v|^2
\bigl(3dv(JY)^2+|\nabla v|^2+15dv(JX)^2\bigr)}
{dv(JY)^2\,(-H(Y,Y))}
+
\frac{2(n-2)dv(JX)^2}{c}
\\
={}&
\frac{4|\nabla v|^2
\bigl(|\nabla v|^2+3dv(JX)^2\bigr)}
{dv(JY)^2\,(-H(Y,Y))}
+
\frac{2(n-2)dv(JX)^2}{c}.
\end{align*}
\end{proof}
\end{proposition}

Combining \eqref{null contact equality},
Proposition~\ref{prop:symmetric-rotation-energy},
Proposition~\ref{prop:full-maximal-eigenvalue-equation}, and Proposition~\ref{prop:JX-Hessian-principal}, we obtain
\begin{equation}\label{precise formula for laplace of eigenvalue}
\begin{aligned}
-\frac12\Delta\mu_1(p)
={}&
-\lambda\left(
1+\frac{3dv(JX)^2}{|\nabla v|^2}
\right)
+\frac{12dv(JX)^2H(Y,Y)}{|\nabla v|^2}
+\frac{6(n-2)dv(JX)^2c}{|\nabla v|^2}
\\
&-
\frac{4|\nabla v|^2
\bigl(|\nabla v|^2+3dv(JX)^2\bigr)}
{dv(JY)^2\,(-H(Y,Y))}
-\frac{2(n-2)dv(JX)^2}{c},
\end{aligned}
\end{equation}
where we aim to show is strictly negative to get a contradiction. For \(n\in\{3,4\}\), discarding the
nonpositive last term and using \(6(n-2)\leq12\), we obtain
\begin{equation}
\label{eq:reduced-maximal-eigenvalue}
\begin{aligned}
-\frac12\Delta\mu_1(p)
\leq{}&
-\lambda\left(
1+\frac{3dv(JX)^2}{|\nabla v|^2}
\right)
\\
&-
\frac{4|\nabla v|^2
\bigl(|\nabla v|^2+3dv(JX)^2\bigr)}
{dv(JY)^2\,(-H(Y,Y))}
+
\frac{12dv(JX)^2\bigl(H(Y,Y)+c\bigr)}
{|\nabla v|^2}.
\end{aligned}
\end{equation}
Thus the only potentially positive contribution in
\eqref{eq:reduced-maximal-eigenvalue} is the last term, and we are going to control it by the following gradient estimate.

\begin{lemma}[Principal-orbit gradient estimate]
\label{lem:principal-gradient-estimate}
Let \(n\geq3\), and let \(u>0\) be a
\(U(1)\times U(n-1)\)-invariant solution of
\(-\Delta u=\lambda u\), with \(\lambda>0\), on an invariant open
subset of \(\CP^n\).  Set \(v=\log u\) and \(H=\Hess v\).
Suppose that \(H\leq0\) near a null contact
\(p\in\Sigma\cap\RP^n\) on the principal stratum, and let
\(\{X,Y\}\) be as in Lemma~\ref{lem:II-principal}.  If
\(H|_{N_p\Sigma}=-c\,g_{\FS}\), then
\[
\label{eq:principal-gradient-estimate}
|\nabla v|^2
\geq
(-H(Y,Y))\bigl(H(Y,Y)+c\bigr).
\]
\end{lemma}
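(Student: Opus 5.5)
The plan is to exploit the fact that the directions of $N_p\Sigma$ are themselves tangent to the $U(1)\times U(n-1)$-orbit through $p$. Both $c$ and $-H(Y,Y)$ can then be written, via the vertical Hessian formula \eqref{vertical Hessian formula}, as pairings of $\nabla v$ with second fundamental form vectors of this cohomogeneity-two orbit. The estimate should then follow from a pointwise inequality between two vectors in the two-dimensional normal plane $\operatorname{span}\{JX,JY\}$.

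\emph{Step 1 (orbit geometry at $p$).} At $p=[r_0:r_1:r_2:0:\cdots:0]$ on the principal stratum, the full orbit $\mathcal M_p:=(U(1)\times U(n-1))\cdot p$ has dimension $2n-2$. Its tangent space is $T_p\mathcal O_p\oplus N_p\Sigma$: the directions $z_3,\dots,z_n$ are generated by $\mathfrak u(n-1)$ rotating $z_2$ into $z_j$. Its normal space is $\operatorname{span}\{JX,JY\}$, which contains $\nabla v$ by the proof of Lemma~\ref{lem:II-principal}. Since $v$ is constant on $\mathcal M_p$, the same computation as in \eqref{vertical Hessian formula} gives, for a unit $W\in N_p\Sigma$,
\[
-c=H(W,W)=-\langle\nabla v,\mathds I_{\mathcal M_p}(W,W)\rangle,
\qquad
-H(Y,Y)=\langle\nabla v,\mathds I_{\mathcal O_p}(Y,Y)\rangle .
\]
Set $A:=\mathds I_{\mathcal O_p}(Y,Y)$ and $N:=\mathds I_{\mathcal M_p}(W,W)$. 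By $U(n-2)$-invariance $N$ does not depend on $W$, and both $A$ and $N$ lie in $\operatorname{span}\{JX,JY\}$. With $B:=N-A$ we have $H(Y,Y)+c=\langle\nabla v,B\rangle$. The claim therefore becomes
\[
\langle\nabla v,A\rangle\langle\nabla v,B\rangle\le|\nabla v|^2 .
\]

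\emph{Step 2 (reduction to a linear-algebra bound).} If $H(Y,Y)+c\le0$ there is nothing to prove. Otherwise it suffices to show that the symmetric form $\xi\mapsto\langle\xi,A\rangle\langle\xi,B\rangle$ on the normal plane has top eigenvalue at most $1$, that is,
\[
\tfrac12\bigl(\langle A,B\rangle+|A|\,|B|\bigr)\le1 .
\]
If this fails as a universal bound, I would instead feed in the specific $\nabla v$ through Lemma~\ref{lem:II-principal}. Its formula for $\mathds I_{\mathcal O_p}(X,X)$, together with \eqref{second fundamental form equation XX XY}, writes $A$ in terms of $dv(JX)$, $dv(JY)$ and $H(Y,Y)$. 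That turns the desired inequality into an explicit quadratic inequality in these quantities.

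\emph{Step 3 (computing $N$ and the mixed curvature).} I would compute $N$ explicitly. The $U(n-1)$-factor acting on $z'$ makes $\mathcal M_p$ fibre over a level set of $|z_2|^2/|z|^2$, with fibres that are distance spheres in the $z'$-directions. So $N$ should be, up to sign, the gradient of $\log$ of the appropriate radius function, computable from the Fubini--Study metric in terms of $r_0,r_1,r_2$. To relate $N$ to $A$ without coordinates, I would apply the Gauss equation to the mixed plane $\{Y,W\}$ in $\mathcal M_p$:
\[
\sec_{\mathcal M_p}(Y,W)=\sec_{\CP^n}(Y,W)+\langle A,N\rangle-|\mathds I_{\mathcal M_p}(Y,W)|^2 .
\]
Here $\sec_{\CP^n}(Y,W)=1$ since $JY\perp W$, and $\sec_{\mathcal M_p}(Y,W)$ is computed from the homogeneous (Riemannian-submersion) structure of the orbit. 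This is analogous to the use of flatness and Gauss--Codazzi in Lemma~\ref{lem:II-principal}, and it should give $\langle A,B\rangle=\langle A,N\rangle-|A|^2$ in a controlled form.

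The main obstacle is Step 3 together with the final inequality in Step 2. I must verify that the explicit curvature data really force $\tfrac12(\langle A,B\rangle+|A||B|)\le1$, or that the weaker, $\nabla v$-dependent version holds using the relations from Lemma~\ref{lem:II-principal}. I expect a genuine computation of $N$ in coordinates on $\Sigma\cap\RP^n$ to be unavoidable there.
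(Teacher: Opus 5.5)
Your Step 1 reduction is correct. In fact $N=\mathds I_{\mathcal M_p}(W,W)=-\kappa\nu$, where $\nu$ is the unit normal at $p$ of the $U(n-1)$-orbit $\mathcal S$ inside the totally geodesic $\CP^{n-1}$ through $p$, and $\kappa$ is the principal curvature of $\mathcal S$ on its holomorphic distribution. But the proposal stops where the proof has to begin: the inequality $\langle\nabla v,A\rangle\langle\nabla v,B\rangle\le|\nabla v|^2$ is never proved. The sufficient condition in Step 2 is false, even on data satisfying every pointwise constraint of the lemma. Take a point with $|z_2|^2/|z|^2=1/20$, so $\kappa^2=19$, and let $Y$ make an angle $\theta$ with $J\nu$, where $\sin^2\theta=1/20$. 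For a suitable ratio $|z_0|:|z_1|$ there is a $\nabla v$ in the normal plane with $c>0$ such that $X=Y^{\perp}$ spans the kernel of $H|_{T_p\mathcal O_p}=-\langle\nabla v,\mathds I_{\mathcal O_p}\rangle\le 0$ and $c+H(Y,Y)>0$. Yet $\langle A,B\rangle\approx 1.74$, so the top eigenvalue $\tfrac12(\langle A,B\rangle+|A||B|)$ exceeds $1$.

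The fallback does not close the gap either. The orbit second fundamental form lemma gives $A$ in the $\{JX,JY\}$ frame, but it says nothing about $N$, and hence nothing about $c$. Without a relation tying $c$ to $|\nabla v|$ and $H(Y,Y)$, the inequality cannot follow. Step 3 yields only the single scalar $\langle A,N\rangle=\sec_{\mathcal M_p}(Y,W)-1$ (note $\mathds I_{\mathcal M_p}(Y,W)=0$), and you would still have to compute its intrinsic side.

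The missing idea, which is the paper's, is to test against $J\nu\in T_p\mathcal O_p$ instead of $Y$.
\begin{enumerate}
\item Since $H|_{T_p\mathcal O_p}$ has eigenvalues $0\ge H(Y,Y)$ and $J\nu$ is a unit vector there, $H(J\nu,J\nu)\ge H(Y,Y)$.
\item In the nontrivial case $c>-H(Y,Y)$, and $c>0$. Hence $(-H(Y,Y))(H(Y,Y)+c)\le c\,(c+H(J\nu,J\nu))$.
\item $\mathds I(J\nu,J\nu)=-\eta\nu$, where $\eta$ is the Hopf principal curvature of the geodesic sphere $\mathcal S\subset\CP^{n-1}$. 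Codazzi applied to $W,JW,J\nu$ gives $\kappa(\kappa-\eta)=1$.
\item Therefore $c\,(c+H(J\nu,J\nu))=\kappa(\kappa-\eta)\,dv(\nu)^2=dv(\nu)^2\le|\nabla v|^2$.
\end{enumerate}
In your language, the form $\xi\mapsto\langle\xi,N\rangle\langle\xi,N-\mathds I(J\nu,J\nu)\rangle$ equals exactly $\langle\xi,\nu\rangle^2$, whose top eigenvalue is $1$. So your eigenvalue idea does work, but for that pair rather than $(A,B)$. Passing from $(A,B)$ to that pair uses the null-contact structure at the actual $\nabla v$ (steps 1 and 2), which a bound uniform in $\xi$ throws away.
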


\begin{proof}
First, we show that it suffices to find a unit vector $\nu$ such that $dv(\nu)^2=c(c+H(J\nu,J\nu))$. If so, 
\[
\label{eq:radial-gradient-identity}
|\nabla v|^2\geq dv(\nu)^2
=
c\bigl(c+H(J\nu,J\nu)\bigr)\geq c(c+H(Y,Y)).
\]
since \(J\nu\in T_p\mathcal O_p\) is a unit vector and
\(\{X,Y\}\) is an orthonormal eigenbasis of
\(H|_{T_p\mathcal O_p}\), with eigenvalues \(0\) and \(H(Y,Y)\). If
\(H(Y,Y)+c\leq0\), the right-hand side of
\eqref{eq:principal-gradient-estimate} is nonpositive, and this inequality holds trivially since $\nabla v\neq 0$ by Lemma~\ref{lem:simple-nullity of H}.
If \(H(Y,Y)+c>0\), then \(c>-H(Y,Y)\), and this gradient estimate follows.

Now, we construct this $\nu$ by the $U(n-1)$-orbit. Write $p=[a_0:a_1:a_2:0:\cdots:0]$.
Let \(M_p\cong\CP^{n-1}\) be the projectivization of the span of $\{(a_0,a_1,0,\ldots,0),e_2,\ldots,e_n\}.$
Then \(M_p\) is totally geodesic. Let $\nu$ be the unit normal to the $U(n-1)$-orbit $\mathcal{S}$ through $p$
inside the totally geodesic $\CP^{n-1}$.
Since $v$ is constant along the $U(n-1)$-orbit,
\(
H(J\nu,J\nu)=-dv(\mathds I_{\mathcal{S}}(J\nu,J\nu))
\)
by (\ref{vertical Hessian formula}).
By the $U(n-2)$-isotropy,
there exist $\kappa,\eta\in\mathbb R$ such that
\(
\mathds I_{\mathcal{S}}(W,W)=-\kappa\,\nu,
\) 
for every unit $W\in N_p\Sigma$, and
\(
\mathds I_{\mathcal{S}}(J\nu,J\nu)=-\eta\,\nu
\) with the mixed components vanishing. By Codazzi equation, we have
\[
2
=
\operatorname{Rm}(W,JW,J\nu,\nu)
=
\left\langle
(\nabla_WA)(JW)-(\nabla_{JW}A)W,J\nu
\right\rangle,
\]
where $A$ is the shape operator for $\mathcal{S}$. 

To evaluate the Codazzi equation, extend \(W\) locally as a unit
section of the \(-\kappa\)-principal distribution; since this
distribution is \(J\)-invariant, \(JW\) is another such section.
Thus
\[
AW=-\kappa W,\qquad A(JW)=-\kappa JW,
\qquad A(J\nu)=-\eta J\nu
\]
along the extension.  Using \(\overline\nabla J=0\) and
\(\overline\nabla_U\nu=-AU\), we have
\[
\begin{aligned}
\langle\nabla_WJW,J\nu\rangle
&=\langle\overline\nabla_WJW,J\nu\rangle
 =\langle\overline\nabla_WW,\nu\rangle
 =-\kappa,\\
\langle\nabla_{JW}W,J\nu\rangle
&=-\langle W,\overline\nabla_{JW}J\nu\rangle
 =-\langle\overline\nabla_WW,\nu\rangle=\kappa.
\end{aligned}
\]
Consequently,
\[
\begin{aligned}
\left\langle(\nabla_WA)(JW),J\nu\right\rangle
&=(\eta-\kappa)
  \langle\nabla_WJW,J\nu\rangle
 =\kappa(\kappa-\eta),\\
\left\langle(\nabla_{JW}A)W,J\nu\right\rangle
&=(\eta-\kappa)
  \langle\nabla_{JW}W,J\nu\rangle
 =-\kappa(\kappa-\eta).
\end{aligned}
\]
Hence
\(
\left\langle
(\nabla_WA)(JW)-(\nabla_{JW}A)W,J\nu
\right\rangle
=
2\kappa(\kappa-\eta),
\)
which implies $\kappa(\kappa-\eta)=1$. Using $-c=H(W,W)=\kappa dv(\nu)$, we have
\(
H(J\nu,J\nu)=\eta\,dv(\nu)
=-\frac{c\eta}{\kappa},
\)
and hence
\[
c+H(J\nu,J\nu)
=
c\left(1-\frac{\eta}{\kappa}\right)
=
\frac{c}{\kappa^2},
\]
which proves $dv(\nu)^2=c(c+H(J\nu,J\nu))$ and finishes the proof.
\end{proof}

By combining the above estimate, we can rule out the null contact on the principal orbit stratum:
\begin{proposition}[Exclusion of principal null contacts in
\(\CP^3\) and \(\CP^4\)]
\label{prop:principal-exclusion-CP34}
Let \(n\in\{3,4\}\), and let \(u>0\) be a
\(U(1)\times U(n-1)\)-invariant solution of
\(-\Delta u=\lambda u\), with \(\lambda>0\), on an invariant open
subset of \(\CP^n\).  If \(H\leq0\) near a point \(p\) on the
principal stratum, then \(H_p<0\).
\end{proposition}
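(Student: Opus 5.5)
We argue by contradiction. Suppose $p$ is a null contact on the principal stratum. Using the $U(1)\times U(n-1)$-action, we move $p$ to a representative in $\Sigma\cap\RP^n$; this preserves $u$, $H$ and the null structure. By Lemma~\ref{lem:simple-nullity of H}, $\ker H_p$ is one-dimensional. Hence the top eigenvalue $\mu_1$ of $H$ is simple near $p$ and therefore smooth there. Since $H\le 0$ near $p$ and $\mu_1(p)=0$, the function $\mu_1$ has a local maximum at $p$. Consequently $\Delta\mu_1(p)\le 0$, that is, $-\tfrac12\Delta\mu_1(p)\ge 0$.

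We also record the sign data. The $U(n-2)$-isotropy gives $H|_{N_p\Sigma}=-c\,g_{\FS}$. Simple nullity, together with $\ker H_p\subset T_p\mathcal O_p\subset T_p\Sigma$ from Lemma~\ref{lem:null-line-orbit}, forces $c>0$. Likewise $a:=-H(Y,Y)>0$. Lemma~\ref{lem:II-principal} gives $dv(JX)\ne0$ and $dv(JY)\ne0$. With these in hand, the identity \eqref{precise formula for laplace of eigenvalue} and its reduction \eqref{eq:reduced-maximal-eigenvalue} hold for $n\in\{3,4\}$.

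The goal is to show that the right-hand side of \eqref{eq:reduced-maximal-eigenvalue} is strictly negative. Set
\[
x=dv(JX)^2,\qquad y=dv(JY)^2,\qquad G=|\nabla v|^2=x+y,
\]
where the last equality holds because $\nabla v=dv(JX)JX+dv(JY)JY$. The first term $-\lambda(1+3x/G)$ is strictly negative. The only possibly positive term is $12x(c-a)/G$, and we split into two cases according to its sign.

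\emph{Case $c\le a$.} The last term is nonpositive, and the second term is negative. Hence the whole right-hand side is $<0$.

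\emph{Case $c>a$.} Lemma~\ref{lem:principal-gradient-estimate} gives $G\ge a(c-a)$, so $c-a\le G/a$. Therefore
\[
\frac{12x(c-a)}{G}\le\frac{12x}{a}.
\]
On the other hand, using $G+3x\ge 3x$ and $G\ge y$, the second term satisfies
\[
\frac{4G(G+3x)}{y\,a}\ \ge\ \frac{12xG}{y\,a}\ \ge\ \frac{12x}{a}.
\]
So the second term, which enters with a minus sign, absorbs the last term. The right-hand side is then at most $-\lambda(1+3x/G)<0$.

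In both cases $-\tfrac12\Delta\mu_1(p)<0$, which contradicts $-\tfrac12\Delta\mu_1(p)\ge0$. Hence there is no null contact on the principal stratum, and $H_p<0$.

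The main point to check carefully is the legitimacy of \eqref{eq:reduced-maximal-eigenvalue} for $n\in\{3,4\}$. Specifically, we must confirm:
\begin{itemize}
\item $6(n-2)\le 12$, so that the term $6(n-2)xc/G$ is bounded by $12xc/G$;
\item the discarded term $-2(n-2)x/c$ is indeed nonpositive, which uses $c>0$;
\item $\mu_1$ is smooth at $p$, so that Proposition~\ref{prop:full-maximal-eigenvalue-equation} applies.
\end{itemize}
The absorption in the case $c>a$ is the key step. It is exactly where the gradient estimate of Lemma~\ref{lem:principal-gradient-estimate} and the bound $n\le4$ are used.
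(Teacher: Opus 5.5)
Your proof is correct and follows essentially the same route as the paper: assume a null contact, use that $\mu_1$ has a local maximum at $p$, and show the right-hand side of \eqref{eq:reduced-maximal-eigenvalue} is strictly negative. As in the paper, you split on the sign of $H(Y,Y)+c$; when it is positive, you absorb the last term into the second using Lemma~\ref{lem:principal-gradient-estimate} together with $|\nabla v|^2\ge dv(JY)^2$ and $|\nabla v|^2+3dv(JX)^2\ge 3dv(JX)^2$. The only difference is cosmetic: you take the absorption non-strictly and get strictness from the $-\lambda$ term, while the paper makes the absorption itself strict.
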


\begin{proof}By \eqref{eq:reduced-maximal-eigenvalue}, it remains to show that
\[
\frac{12dv(JX)^2\bigl(H(Y,Y)+c\bigr)}
{|\nabla v|^2}
<
\frac{4|\nabla v|^2
\bigl(|\nabla v|^2+3dv(JX)^2\bigr)}
{dv(JY)^2\,(-H(Y,Y))}.
\]
If \(H(Y,Y)+c\leq0\), this is immediate.  Suppose therefore that
\(H(Y,Y)+c>0\).  By
Lemma~\ref{lem:principal-gradient-estimate}, we have
\[
\frac{12dv(JX)^2\bigl(H(Y,Y)+c\bigr)}
{|\nabla v|^2}
\leq
\frac{12dv(JX)^2}{-H(Y,Y)}.
\]
On the other hand, since
\(
|\nabla v|^2=dv(JX)^2+dv(JY)^2,
\)
we have
\[
|\nabla v|^2\geq dv(JY)^2,
\qquad
|\nabla v|^2+3dv(JX)^2>3dv(JX)^2.
\]
Therefore
\[
\frac{4|\nabla v|^2
\bigl(|\nabla v|^2+3dv(JX)^2\bigr)}
{dv(JY)^2\,(-H(Y,Y))}
>
\frac{12dv(JX)^2}{-H(Y,Y)}.
\]
Combining the last two inequalities gives the desired estimate, which
contradicting the fact that \(\mu_1\) attains a local maximum at
\(p\).  Hence no null contact can occur, and \(H_p<0\).
\end{proof}

Since all null-contacts have been excluded in our setting, we can prove the main theorem now:
\begin{proof}[Proof of the main theorem]
If strict log-concavity failed, Proposition~\ref{reduction to null contact} would produce an invariant weakly log-concave first eigenfunction with an interior null contact.
Such a contact is excluded on the singular stratum by Proposition~\ref{lem:singular-orbit-exclusion}, and on the principal stratum by Proposition~\ref{prop:principal-exclusion-CP2} when $n=2$ and Proposition~\ref{prop:principal-exclusion-CP34} when $n=3,4$.
Thus no null contact exists, and the first Dirichlet eigenfunction is strictly log-concave.
\end{proof}

\printbibliography

\end{document}